\documentclass[11pt]{amsart}
\usepackage{amscd,amssymb}
\usepackage{graphicx}
\theoremstyle{plain}

\newtheorem{lemma}{Lemma}[section]
\newtheorem{theorem}[lemma]{Theorem}
\newtheorem{prop}[lemma]{Proposition}
\newtheorem{cor}[lemma]{Corollary}

\newtheorem{definition}[lemma]{Definition}

\newcommand{\bbA}{{\mathbb{A}}}
\newcommand{\bbC}{{\mathbb{C}}}
\newcommand{\bbF}{{\mathbb{F}}}

\newcommand{\bbQ}{{\mathbb{Q}}}
\newcommand{\bbR}{{\mathbb{R}}}
\newcommand{\bbZ}{{\mathbb{Z}}}

\newcommand{\Gal}{{\mathrm{Gal}}}
\newcommand{\Fr}{{\mathrm{Fr}}}

\newcommand{\unit}{{\mathrm{unit}}}

\setlength{\oddsidemargin}{0.2in}
\setlength{\evensidemargin}{0.2in}
\setlength{\textwidth}{6.1in}

\begin{document}
\title{Computing finite Galois groups arising from automorphic forms}

\author{Kay Magaard and Gordan Savin}
\address{ School of Mathematics, University of Birmingham, Birmingham, B15 2TT, England} 
\email{k.magaard@bham.ac.uk}
\address{Department of Mathematics, University of Utah, Salt Lake 
City, UT 84112}
\email{savin@math.utah.edu}
 \subjclass[2010]{11F80, 12F12}

\maketitle
\vskip 10pt

\section{Introduction} 

Let $p$ be a prime and $G_2(p)$  the exceptional group of type $G_2$ over the finite field of $p$ elements. The goal of this paper is 
 to construct $G_2(p)$  as  a Galois group over $\mathbb Q$ by reducing, modulo $p$, the $p$-adic representation  attached to a regular, self-dual, 
cuspidal  automorphic representation of $GL_7$.  The first step in this direction was taken in \cite{GS} where an algebraic, in the sense of Gross \cite{Gr},
 automorphic representation  $\pi$  was constructed on an anisotropic form of $G_2$. It was also shown that $\pi$ lifts to a cuspidal automorphic representation
  $\Sigma$ on $Sp_6$. In Section 8 we show that $\Sigma$  lifts to a regular, self-dual 
cuspidal automorphic representation  $\Pi$ on $GL_7$ by the recent results of Arthur \cite{Ar}. Since the field of definition of $\Pi$ is $\mathbb Q$, 
 using a result of Taylor \cite{Ta},  in Section 7 we prove that the $p$-adic representation of $\Gal(\bar{\mathbb Q}/\bbQ)$ attached to $\Pi$ is defined over $\mathbb Q_p$. 
We also show that the image of the $p$-adic representation is contained in $G_2(\bbQ_p)$. One expects that $G_2(p)$ appears as a quotient of the image for all but finitely many primes. We show that this is indeed the case for an explicit set of primes of density at least 1/18. The resulting extensions, with the Galois group $G_2(p)$, are unramified at $5$ and $p$ only. The method of the proof is as follows. The components $\Pi_2$ and $\Pi_3$ are unramified and their Satake parameters were computed by Lansky and Pollack in \cite{LP}. In particular, we know conjugacy classes of two elements, the Frobenia at 2 and 3,
 in the image of the $p$-adic representation attached to $\Pi$. Thus we are lead to the following three problems: 1) Developing a notion of reduction, modulo $p$, of rational conjugacy classes. We do this in a generality of split reductive groups in Sections 2-4. 2) Understanding the Galois group of palindromic polynomials. This is 
 the topic of Section 5.   3) Giving a criterion when two elements generate $G_2(p)$. A criterion,  in Section 6, is based on Aschbacher's classification of 
maximal subgroups of $G_2(p)$. This, as well as some other aspects of this paper,  is similar in flavor to a work of Dieulefait \cite{Du}. 

\vskip 5pt 
We benefited from conversations with Ga\"etan Chenevier,  Michael Dettweiler, Wee Teck Gan, Dick Gross, Colette M\oe glin  and Sug Woo Shin. The second author has been supported by a grant from NSF.

\section{Rational semi-simple  conjugacy classes}  

In this section $K$ is any field and $K_s$ a separable closure of $K$. Let $G$ be a connected reductive group split over $K$. 
 We shall fix a faithful algebraic representation $V$ of $G$.  Let $g\in G(K_s)$ be a semi-simple element. 
 Let $R_g(x)\in K_s[x]$ denote the characteristic polynomial of $g$ acting on $V\otimes_K K_s$. 
Let $T$ be a split torus, defined over $K$. Let $\{\chi_i\}$ be the multi-set of weights of $V$ where every weight $\chi$ appears with multiplicity 
$\dim V_{\chi}$. Then, for every $t\in T(K_s)$, 
\[ 
R_t(x)= \prod_{i=1}^n(x-\chi_i(t))
\] 
where $n=\dim(V)$. Let $n_0$ be the dimension of the trivial weight space. We can write 

\[ 
R_t(x)= P_t(x)(x-1)^{n_0}. 
\] 
 Since any semi-simple element $g$ is conjugated to an element in 
$T(K_s)$, the polynomial  $P_g(x)$ is well defined for any semi-simple element and it is an invariant of the conjugacy class $C(K_s)$ of $g$. Hence we also write $P_C(x)=P_g(x)$. 
The class  $C$ is called $K$-rational if  $\sigma(C)=C$ for all $\sigma \in \Gal(K_s/K)$.  In that case $P_C(x)\in K[x]$.  

\vskip 5pt

Let $t\in  C\cap T(K_s)$. Assume that $t$ is strongly regular i.e. the centralizer of $t$ in $G(K_s)$ is $T(K_s)$. Then any  element in $C\cap T(K_s)$ 
 is a conjugate of $t$ by a unique element in the Weyl group $W$. Now assume that $C$ is a $K$-rational conjugacy  class.  Then
$C\cap T(K_s)$ is $\Gal(K_s/K)$-stable, so for every $\sigma \in \Gal(K_s/K)$ there exists a unique element $w\in W$ such that 
$\sigma(t)=t^w$.  Thus, to the $K$-rational conjugacy class $C$ of strongly regular elements we have assigned a homomorphism 
\[ 
\varphi_C: \Gal(K_s/K) \rightarrow W 
\]
unique up to conjugation by $W$. 
Let $E=E_C$ be the finite field extension  of $K$ corresponding to the kernel of $\varphi_C$. 

\begin{prop} Let $C$ be a strongly regular and $K$-rational conjugacy class.
The field $E_C$ is the splitting field of the polynomial $P_C(x)$. 
\end{prop}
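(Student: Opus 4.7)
The plan is to identify the fixed field $E_C$ of $\ker \varphi_C \subset \Gal(K_s/K)$ with the splitting field $L$ of $P_C(x)$ by showing that both correspond to the same subgroup of $\Gal(K_s/K)$. Since $E_C$ is by definition the fixed field of $\ker \varphi_C$, it suffices to prove $\ker \varphi_C = \Gal(K_s/L)$.

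First I would unwind the definition of $\varphi_C$: for $\sigma \in \Gal(K_s/K)$ the element $\varphi_C(\sigma)$ is the unique $w \in W$ with $\sigma(t) = t^w$, so $\sigma$ lies in $\ker \varphi_C$ if and only if $\sigma$ fixes $t$ as an element of $T(K_s)$. Since $T$ is split over $K$, every character $\chi \in X^*(T)$ is $K$-rational, so $\chi(\sigma(t)) = \sigma(\chi(t))$. Using the identification $T(K_s) = \mathrm{Hom}(X^*(T), K_s^\times)$, the condition $\sigma(t) = t$ is equivalent to $\sigma$ fixing $\chi(t)$ for every $\chi \in X^*(T)$.

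The next step is to reduce this universal quantifier to the set of weights of $V$. Because $V$ is faithful on $G$, the restriction $V|_T$ is faithful on the split torus $T$, which forces its weights $\{\chi_i\}$ to generate the full character lattice $X^*(T)$. Hence $\sigma(t) = t$ if and only if $\sigma$ fixes each $\chi_i(t)$. The trivial weight contributes the value $1$, which is fixed automatically, so the condition is that $\sigma$ fix $\chi_i(t)$ for every non-trivial weight; these are precisely the roots of $P_C(x)$, so this is the defining condition for $\sigma \in \Gal(K_s/L)$. This gives $\ker \varphi_C = \Gal(K_s/L)$, and hence $E_C = L$.

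I expect the main obstacle to be the torus-theoretic point that faithfulness of $V$ on $G$ translates into the weights generating the full lattice $X^*(T)$, rather than merely a finite-index sublattice. Without this one could imagine $\sigma$ fixing every $\chi_i(t)$ while $\sigma(t)/t$ is a nontrivial torsion element of $T(K_s)$ annihilated by all weights, which would allow $E_C$ to properly contain $L$. Once this lattice-generation fact is in hand, the rest of the argument is pure bookkeeping.
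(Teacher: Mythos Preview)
Your argument is correct and follows essentially the same route as the paper's proof: both identify $\ker\varphi_C$ with the Galois group fixing the roots $\chi_i(t)$ of $R_C(x)$, the paper compressing this into the single clause ``$t$ is determined by the values $\chi_i(t)$''. Your explicit discussion of why faithfulness of $V$ forces the weights to span the full character lattice (rather than a finite-index sublattice) is exactly the content behind that clause, so you have simply unpacked what the paper leaves implicit.
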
 
\begin{proof} 
 Since $R_C(x)=\prod_{i=1}^n(x-\chi_i(t))$ and $t$ is determined by the values $\chi_i(t)\in K_s^{\times}$, 
the subgroup of $\Gal(K_s/K)$ fixing the splitting field of $R_C$ (= the splitting field of $P_C$) is indeed the kernel of $\varphi_C$. 
\end{proof}

Note that if $-1\in W$ then $P_t(x)$ is a palindromic polynomial.

\section{Finite tori} 

Assume now that $k$ is a finite field of order $q$. Then $k_s=\bar k$ and $\Gal(\bar k/k)=\langle \Fr_q\rangle $.  Let $T$ be the split torus contained in the split reductive group, as in the previous section, this time defined over $k$. Let $W$ be the Weyl group. For every $w\in W$ let  
\[ 
T_w(k) \subset T(\bar k) 
\] 
be the group of $k$-points in a torus $T_w$ such that  $T_w(\bar k)=T(\bar k)$, but the action of $\Fr_q$ is twisted by $w^{-1}$. 
The order of $T_w(k)$ is given as follows, see \cite{Ca}.  Let $X$ be the lattice of characters of $T$, and $A=X\otimes_{\mathbb Z} \mathbb R$. Let $\Phi_w$ be the characteristic polynomial of $w$ acting on $A$. Then 
 \[ 
 |T_w (k)|=\Phi_w(q). 
 \] 
 
 \vskip 5pt 
 Let us look at the case $G=G_2$. Then $W$ is the dihedral group $D_6$. The conjugacy classes in $W$ are 
\[ 
\begin{array}{|c||c|c|c|c|c|c|}
\hline  C & 1a & 2a & 2b & 2c & 3a & 6a \\
\hline  |C| & 1 & 3 & 3 & 1 & 2 & 2 \\
\hline
\end{array} 
\] 
where the number in the first row is the order of any element in the class and the number 
in the second row is the number of elements in the class. There are 3 classes of elements of 
order 2: the class of reflections about long roots is denoted by 2a, the class of reflections 
about short roots is denoted by 2b, and 2c denotes $-1$. The orders of the corresponding tori 
are 
\[ 
\begin{array}{|c||c|c|c|c|c|c|}
\hline  w & 1a & 2a & 2b & 2c & 3a & 6a \\
\hline  |T_w| & (q-1)^2 & q^2-1 & q^2-1 & (q+1)^2 & q^2+q+1 & q^2-q+1 \\
\hline
\end{array} 
\]

 \section{Reduction mod ${\mathfrak p}$ of conjugacy classes} 
 
 Assume now that $K$ is a number field and $A$ its ring of integers. Assume that we have given to $G$ a structure of an algebraic group scheme over $A$ by 
 fixing a Chevalley lattice $L\subseteq V$ with respect to $T$. In particular, $L$ is a direct sum  of its weight components $L_{\chi}=L\cap V_{\chi}$. 
   For every ring 
 $R$ such that $A\subseteq R \subseteq K_s$ 
 \[ 
 G(R)=\{ g\in G(K_s) ~|~ g \cdot L\otimes_A R= L\otimes_A R\}. 
 \] 
 Since $L$ is a direct sum of  the weight components $L_{\chi}$,  the group  $T(R)$  consists of all elements $t\in T(K_s)$ such that $\chi(t)\in R^{\times}$ for all weights $\chi$ of 
  $V$. 
  
  \vskip 5pt 
 
  Let ${\mathfrak m}$ be a maximal ideal in $R$ let $k_{\mathfrak m} $ be the corresponding residual field. Any element  $g\in G(R)$ acts naturally on 
   the quotient $L\otimes_A k_{\mathfrak m}= L\otimes_A R/L\otimes_A {\mathfrak m}$ as an element in $G(k_{\mathfrak m})$ denoted by $\bar g$. The element $\bar g$ is called the reduction of $g$ modulo ${\mathfrak m}$. 
 
 \vskip 5pt 
 For the remainder of this section we fix $C$,  a strongly regular and $K$-rational conjugacy class.  Let $P_C(x)$ be the corresponding characteristic polynomial. Let $E$ be the finite Galois extension of  $K$ corresponding to the kernel of the homomorphism $\varphi_C:  \Gal(K_s/K) \rightarrow W $ constructed by means of 
 $t\in C\cap T(K_s)$. In particular, $t \in T(E)$. 
 Let $B$ be the ring of integers in $E$. Let $S$ be the set of primes (i.e maximal ideals)  in $A$ such that: 
 \begin{enumerate} 
 \item  the field $E$ is unramified outside $S$. 
 \item $P_C(x) \in A_{\mathfrak p}[x]$ for every ${\mathfrak p}\notin S$, where $A_{\mathfrak p}$ is the localization of $A$ by $A\setminus {\mathfrak p}$.  
 \end{enumerate} 
 
 Let ${\mathfrak q}$ be a prime in $B$  such that ${\mathfrak q}\cap A\notin S$. Let $B_{\mathfrak q}$ be the localization of $B$ by $B\setminus {\mathfrak q}$. 
Since $P_C(x)$ is a monic polynomial, (2) implies that  $\chi(t)\in B_{\mathfrak q}$. Hence $t\in T(B_{\mathfrak q})$ and  $\bar t\in T(k_{\mathfrak q})$, the reduction of $t$ modulo ${\mathfrak q}$,  is well defined. The following is obvious from the construction. 

\begin{prop} \label{P:mini_torus} Assume ${\mathfrak q}\cap A\notin S$. 
Let $\Fr_{\mathfrak q}$ be the Frobenius generator of the decomposition subgroup $D_{\mathfrak q}\subseteq \Gal(E/K)$. Let $w=\varphi_C(\Fr_{\mathfrak q})$. 
 The element $\bar t$ is contained in the finite torus $T_w(k_{\mathfrak p}) \subset T(k_{\mathfrak q}) \subseteq G(k_{\mathfrak q})$. 
 \end{prop}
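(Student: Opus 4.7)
The plan is to unwind the defining property of $\varphi_C$ from Section~2 and transport it to the residue field using that ${\mathfrak q}\cap A\notin S$. By the construction of $\varphi_C$, the element $t$ lies in $T(E)$ and satisfies $\sigma(t)=t^{\varphi_C(\sigma)}$ for every $\sigma\in\Gal(E/K)$. Specialising to $\sigma=\Fr_{\mathfrak q}$ and writing $w=\varphi_C(\Fr_{\mathfrak q})$, this reads $\Fr_{\mathfrak q}(t)=t^w$ as an identity in $T(E)$, hence also in $T(B_{\mathfrak q})$ once one invokes the observation already made in the paragraph preceding the proposition, namely that $t\in T(B_{\mathfrak q})$.

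Next I would reduce this identity modulo ${\mathfrak q}$. Since ${\mathfrak p}={\mathfrak q}\cap A\notin S$, the extension $E/K$ is unramified at ${\mathfrak p}$, so the decomposition group $D_{\mathfrak q}$ is cyclic, generated by $\Fr_{\mathfrak q}$, and maps isomorphically onto $\Gal(k_{\mathfrak q}/k_{\mathfrak p})$ with $\Fr_{\mathfrak q}$ sent to the arithmetic Frobenius $\Fr_q$, where $q=|k_{\mathfrak p}|$. In particular $\Fr_{\mathfrak q}$ preserves $B_{\mathfrak q}$, and reduction is $\Fr_{\mathfrak q}$-equivariant: $\overline{\Fr_{\mathfrak q}(t)}=\Fr_q(\bar t)$. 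The Weyl group $W$ acts on $T$ through $A$-automorphisms, hence commutes with reduction, giving $\overline{t^w}=\bar t^w$. Combining the two yields $\Fr_q(\bar t)=\bar t^w$ in $T(k_{\mathfrak q})$.

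To conclude, I would observe that this is precisely the defining relation for the $k_{\mathfrak p}$-points of the twisted torus $T_w$ introduced at the start of Section~3: since the Frobenius on $T_w$ is $\Fr_q$ twisted by $w^{-1}$, an element $s\in T(\bar k_{\mathfrak p})$ belongs to $T_w(k_{\mathfrak p})$ exactly when $\Fr_q(s)=s^w$. Therefore $\bar t\in T_w(k_{\mathfrak p})\subset T(k_{\mathfrak q})\subseteq G(k_{\mathfrak q})$, as claimed. There is no substantive obstacle — the statement is, as the authors advertise, essentially a bookkeeping exercise — and the only point one should be careful about is matching the direction of the twist in the definition of $T_w$ with the direction of the conjugation $t\mapsto t^w$ used to set up $\varphi_C$.
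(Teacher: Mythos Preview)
Your argument is correct and is precisely the unwinding of definitions that the paper has in mind: the authors give no proof at all beyond the remark ``The following is obvious from the construction,'' and your write-up makes that construction explicit. The only thing you add beyond the paper is care with the direction of the twist, which is appropriate.
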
 
 
   Starting with $t\in T(E)$ we have defined $\bar t\in G(k_{\mathfrak q})$ for all primes ${\mathfrak p}\notin S$, where ${\mathfrak q}$ is a prime ideal in $B$ such that  $\mathfrak q\cap A= {\mathfrak p}$. 
   Let $\bar C$ be the conjugacy class of $\bar t$ in $G(\bar k_{\mathfrak p})$. 
   By Proposition \ref{P:mini_torus} the action of $\Fr_{\mathfrak q}$  on $\bar t$ is  the action of $\varphi_C(\Fr_{\mathfrak q})\in W$ on $\bar t$. 
   Hence $\bar C$ is a $k_{\mathfrak p}$-rational conjugacy class. The characteristic polynomial $P_{\bar C}(x) \in k_{\mathfrak p}[x]$ is  the reduction of $P_C(x)$ modulo ${\mathfrak p}$. The class $\bar C$ does not depend on the choice of the ideal ${\mathfrak q}$. Indeed, if ${\mathfrak q}'$ is another prime ideal in $B$ such that  $\mathfrak q\cap A= {\mathfrak p}$
 then there exist $\sigma\in \Gal(E/K)$ such that $\sigma({\mathfrak q}')={\mathfrak q}$. 
  Replacing ${\mathfrak q}$ by ${\mathfrak q}'$ is equivalent to replacing $t$ by $t^{w}$ where $w=\varphi_C(\sigma)$.  
  Summarizing, the following definition is independent of the choice of $t$. 
 
 \begin{definition} For every ${\mathfrak p}\notin S$ 
 let $\bar C$ be the $G(\bar k_{\mathfrak p})$-conjugacy class of $\bar t$. The class $\bar C$ is called the reduction of $C$ modulo ${\mathfrak p}$.   
  \end{definition} 
 
 Of course, we know that the  class $\bar C$ is $k_{\mathfrak p}$-rational.
 
 \begin{prop} \label{torsion}
 Let $m$ be a positive integer.  Assume that the roots of 
$P_C(x)$ are not roots of 1.  Then there exists a finite set of primes $S_m \supseteq S$ 
such that for every ${\mathfrak p}\notin S_m$ the elements in $\bar C$, the reduction of $C$ modulo $\mathfrak p$,  do not have the order dividing $m$. 
\end{prop}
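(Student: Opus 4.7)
The plan is to encode the condition $\bar t^m = 1$ as a coprimality statement between the reduced characteristic polynomial and $x^m - 1$, and then control this coprimality via a single resultant in $K$. Since $V$ is faithful and $\bar t \in T(k_{\mathfrak q})$, we have $\bar t^m = 1$ if and only if $\chi(\bar t)^m = 1$ for every weight $\chi$ of $V$, equivalently, every root of $R_{\bar C}(x) = P_{\bar C}(x)(x-1)^{n_0}$ in $\bar k_{\mathfrak p}$ is an $m$-th root of unity. The factor $(x-1)^{n_0}$ always satisfies this; what matters is whether $P_{\bar C}(x)$ shares a root with $x^m - 1$ in $\bar k_{\mathfrak p}$.

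So the first step is to form the resultant $r = \mathrm{Res}(P_C(x), x^m - 1) \in K$. By the hypothesis that no root of $P_C$ is a root of unity, $P_C$ and $x^m - 1$ are coprime in $\bar K[x]$, so $r \neq 0$. For every $\mathfrak p \notin S$, assumption (2) of Section 4 gives $P_C(x) \in A_{\mathfrak p}[x]$; since both $P_C$ and $x^m - 1$ are monic, their resultant lies in $A_{\mathfrak p}$ and its reduction modulo $\mathfrak p$ equals $\mathrm{Res}(P_{\bar C}(x), x^m - 1)$.

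Next I would define $S_m$ to be the union of $S$ with the set of primes $\mathfrak p \notin S$ at which $v_{\mathfrak p}(r) > 0$. This set is finite because $r$ is a fixed nonzero element of $K$. For any $\mathfrak p \notin S_m$, the resultant $r$ is a unit at $\mathfrak p$, so $\mathrm{Res}(P_{\bar C}(x), x^m - 1) \neq 0$ in $k_{\mathfrak p}$, and hence $P_{\bar C}(x)$ and $x^m - 1$ share no root in $\bar k_{\mathfrak p}$. The hypothesis forces $P_C$ to have at least one root, so $P_{\bar C}$ has positive degree, and therefore $\bar t$ has an eigenvalue on $V \otimes \bar k_{\mathfrak p}$ that is not an $m$-th root of unity. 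Thus $\bar t^m \neq 1$, and the same is true for every element of $\bar C$.

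There is no real obstacle in this argument; its whole content lies in the fact that the resultant of two monic polynomials commutes with reduction modulo $\mathfrak p$, which is standard. The only mildly delicate point is making sure of the faithfulness step, namely that vanishing of all weights of $V$ at an element of $T(k_{\mathfrak q})$ forces the element to be trivial; this is immediate since $V$ is a faithful representation and $T$ is a split torus, so the weights of $V$ span $X^\ast(T) \otimes \mathbb{Q}$.
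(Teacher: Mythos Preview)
Your proof is correct and follows essentially the same approach as the paper's: both show that $P_C(x)$ and $x^m-1$ remain coprime after reduction modulo almost every prime, so that $\bar t$ has an eigenvalue which is not an $m$-th root of unity. The only difference is cosmetic---you package coprimality via the resultant $r=\mathrm{Res}(P_C,\,x^m-1)$ and exclude the finitely many primes at which $r$ is not a unit, whereas the paper writes out a B\'ezout identity $P(x)P_C(x)+Q(x)(x^m-1)=1$ in $K[x]$ and excludes the primes at which the coefficients of $P$ or $Q$ fail to lie in $A_{\mathfrak p}$.
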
  
\begin{proof}   
 By the assumption on $P_C(x)$, the polynomials $P_C(x)$ and $x^m-1$ are relatively prime. In particular, there exists polynomials $P(x)$ and $Q(x)\in K[x]$ such that 
\[ 
P(x) P_C(x)+ Q(x) (x^m-1)=1. 
\] 
Let  $S_m \supset S$ be a set of primes such that for every $\mathfrak p\notin S_m$ the coefficients of $P(x)$ and $Q(x)$ are in $A_{\mathfrak p}$. 
We can reduce the equation modulo every such prime. It follows that $\bar P_C(x)$ is relatively prime to $x^m-1$ in 
$k_{\mathfrak p}[x]$.  Hence the eigenvalues of $\bar t$ do not have the order dividing $m$. 
\end{proof}

  In view of Propositions \ref{P:mini_torus} and \ref{torsion} we have certain control of the order of elements in the class $\bar C$. 
 Let $K_{\mathfrak p}$ be the ${\mathfrak p}$-adic completion of $K$. Let $s\in C(K_{\mathfrak p})$. Let $\Lambda$ be a lattice in  
$V\otimes_K K_{\mathfrak p}$ such that $s\cdot \Lambda=\Lambda$. Let 
\[ 
\bar s:\Lambda /{\mathfrak p}\Lambda \rightarrow \Lambda/{\mathfrak p}\Lambda
\] 
 be the map induced by $s$. Let $|s|_{{\mathfrak p}'}$ be the prime to ${\mathfrak p}$-part of the order of $\bar s$. This is the order of the semi-simplification of $\bar s$. In particular, it does not depend on the choice of the lattice $\Lambda$. 
 For our applications we shall need the following:

\begin{prop} \label{P:order}  
Let ${\mathfrak p}\notin S$.  Let $s\in C(K_{\mathfrak p})$, where  $K_{\mathfrak p}$ be the ${\mathfrak p}$-adic completion of $K$.  Then $|s|_{{\mathfrak p}'}$ 
is equal to the order of  elements in $\bar C$, the reduction of $C$ modulo ${\mathfrak p}$. 
\end{prop}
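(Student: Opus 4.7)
My strategy is to show, after choosing compatible primes above $\mathfrak p$, that the semisimplification $\bar s^{\mathrm{ss}}$ and the torus element $\bar t \in T(k_{\mathfrak q}) \subseteq G(\bar k_{\mathfrak p})$ of Proposition~\ref{P:mini_torus} have the same multi-set of eigenvalues on $V$, and then to read off the common order from this eigenvalue data.

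First I would fix a prime $\mathfrak q$ of $B$ above $\mathfrak p$ and an embedding $E \hookrightarrow \overline{K_{\mathfrak p}}$ such that the standard valuation on $\overline{K_{\mathfrak p}}$ restricts to the $\mathfrak q$-adic valuation on $E$. Since $s \in C(K_{\mathfrak p})$ is $G(\overline{K_{\mathfrak p}})$-conjugate to $t \in T(E)$, the element $s$ has characteristic polynomial
\[
R_C(x) = P_C(x)(x-1)^{n_0} = \prod_i (x - \chi_i(t)) \in A_{\mathfrak p}[x]
\]
on $V$. Because $\Lambda$ is stabilized by both $s$ and $s^{-1}$, each eigenvalue $\chi_i(t)$ is a unit in the valuation ring of $\overline{K_{\mathfrak p}}$; restricted to $E$ this forces $\chi_i(t) \in B_{\mathfrak q}^{\times}$. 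Reducing the integral factorization of $R_C$ modulo $\mathfrak q$ then yields $\overline{R_C}(x) = \prod_i (x - \chi_i(\bar t))$ in $k_{\mathfrak q}[x]$. On the other hand, $R_C(x)$ is the characteristic polynomial of $s$ acting on $\Lambda$, so its reduction $\overline{R_C}(x)$ is the characteristic polynomial of $\bar s$ on $\Lambda/\mathfrak p\Lambda$. Hence the multi-set of eigenvalues of $\bar s$ — equivalently, of $\bar s^{\mathrm{ss}}$ — is precisely $\{\chi_i(\bar t)\}$, the same as for $\bar t$ acting on $V\otimes k_{\mathfrak q}$.

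Finally I would translate eigenvalue data into orders. Every $\chi_i(\bar t)$ lies in $\bar k_{\mathfrak p}^{\times}$, so its order is automatically prime to the residual characteristic; hence $|s|_{\mathfrak p'}$, defined as the order of $\bar s^{\mathrm{ss}}$, equals the least common multiple of the orders of the $\chi_i(\bar t)$. Because $V$ is a faithful representation of the group scheme $G$, the inclusion $T \hookrightarrow GL(V)$ is a closed immersion, equivalently the weights $\{\chi_i\}$ generate the character lattice $X^*(T)$; this forces $\bar t^m = 1$ in $T(\bar k_{\mathfrak p})$ if and only if $\chi_i(\bar t)^m = 1$ for every $i$, so the order of $\bar t$ is the very same lcm. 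Since elements of $\bar C$ are $G(\bar k_{\mathfrak p})$-conjugates of $\bar t$, they share its order, and the proposition follows.

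The main technical point I expect to verify carefully is that the $\mathfrak p$-adic reduction of the eigenvalues of $s$ matches the torus-theoretic reduction used to define $\bar C$. This is precisely where the hypothesis $\mathfrak p \notin S$ is used: it guarantees both that $R_C(x) \in A_{\mathfrak p}[x]$ reduces coherently mod $\mathfrak p$ and that $t \in T(B_{\mathfrak q})$ reduces to the element $\bar t \in T(k_{\mathfrak q})$ with $\chi_i(\bar t) = \overline{\chi_i(t)}$. Once the compatible primes $\mathfrak q \mid \mathfrak p$ are fixed, the two reductions of the characteristic polynomial agree, and the remaining steps are eigenvalue bookkeeping.
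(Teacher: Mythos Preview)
Your argument is correct and takes a genuinely different route from the paper. The paper passes to the unramified extension $E_{\mathfrak q}$ of $K_{\mathfrak p}$, invokes Hilbert~90 for the split torus $T$ to conjugate $s$ to $t$ over $E_{\mathfrak q}$, and then uses the lattice-independence of $|s|_{\mathfrak p'}$ to replace $\Lambda$ by the Chevalley lattice, where $\bar t$ visibly acts semisimply and defines $\bar C$. You instead stay with the original lattice $\Lambda$ and compare only characteristic polynomials: since $s$ and $t$ are conjugate over $\overline{K_{\mathfrak p}}$, they share the integral characteristic polynomial $R_C(x)$, and reducing it simultaneously mod $\mathfrak p$ (for $\bar s$) and mod $\mathfrak q$ (for $\bar t$) matches the eigenvalue multi-sets; faithfulness of $V$ then converts this into an equality of orders. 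Your approach is more elementary in that it avoids Galois cohomology entirely and never needs to produce an actual conjugating element, at the cost of the small extra verification that the weights of $V$ generate $X^*(T)$ so that the order of $\bar t$ in $G$ is recovered from its eigenvalues on $V$. The paper's approach, by literally identifying $s$ with $t$ after base change, makes that last step tautological.
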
 
\begin{proof} It suffices to prove this statement after taking an unramified extension of $K_{\mathfrak p}$. In particular, we can extend by $E_{\mathfrak q}$,  the completion of $E$ at a prime $\mathfrak q\subset B$ such that $\mathfrak q\cap A=\mathfrak p$. Recall that  $C(K_s)$ contains $t \in T(E)\subset T(E_{\mathfrak q})$.  
Hence $s$ is conjugated to $t$ over a separable extension of $E_{\mathfrak q}$. However, since 
$T(E_{\mathfrak q})$ is a split torus, its Galois cohomology is trivial by the Hilbert Theorem 90. Thus $s$ is conjugated to $t$ over $E_{\mathfrak q}$. 
Now it suffices to prove the statement for $t$ and a lattice invariant under multiplication by $t$. Since the answer does not depend on the lattice 
we may as well choose the Chevalley lattice where $\bar t$ acts semi-simply.  Since $\bar t$ defines the class $\bar C$, the proposition follows. 
\end{proof}

\section{Galois groups of palindromic polynomials} 

Assume that $K$ is a field of characteristic 0. Let $P(x)\in K[x]$ be an irreducible palindromic polynomial of degree $2n$. Let  
 $x_{1}^{\pm 1}, \ldots , x_{n}^{\pm 1}$ be the roots of $P(x)$.
 Using the substitution
 $$
 y=x+\frac{1}{x}
 $$
 the polynomial can be reduced to a polynomial $Q(y)$ of degree $n$. If $y_{1}, \ldots ,
 y_{n}$ are its roots, then the roots of $P(x)$ are found by solving the equations
 $$
 x+ \frac{1}{x}=y_{i} \text{ for }i =1, \ldots ,n.
 $$
 Let $E$ and $F$ be the splitting fields of 
$P(x)$ and $Q(y)$. Let $\Gal(E/K)$ and $\Gal(F/K)$ be their Galois groups over $K$. 
Then $\Gal(F/K)\subseteq S_n$, where $S_n$ is the group of permutations of $y_1, \ldots , y_n$ and  $\Gal(E/F)\subseteq C_2^n$, the elementary $2$-group of order 
$2^n$ which acts by permuting $x_i$ and $1/x_i$. 
 The group $\Gal(E/K)$ is contained in a semi-direct product of $C_2^n$ and $S_n$, the Weyl group of type $BC_n$.  Let 
$\Delta$ be the discriminant of $Q(y)$. Let $\epsilon$ be the sign character $S_n$. By the 
restriction and inflation we view $\epsilon$ as the character of $\Gal(E/K)$. The group 
$\Gal(E/K)$  has another quadratic character $\epsilon'$ defined as follows. Let 
\[ 
\Delta'=\prod_{i=1}^n (x_i-{x}^{-1}_i)^2. 
\] 
Then we define $\sigma(\sqrt{\Delta'})=\epsilon(\sigma)\sqrt{\Delta'}$ for every $\sigma$ in 
$\Gal(E/K)$. Note that the restriction of $\epsilon'$ to $\Gal(E/F)\subseteq C_2^n$ is given by 
$\epsilon'(a_1, \ldots, a_n)=\prod_{i=1}^n a_i$. Usefulness of $\epsilon'$ lies in the fact 
that $\Delta'$ can be easily computed. Indeed, 
\[ 
\Delta'=\prod_{i=1}^n (x_i-{x}^{-1}_i)^2=\prod_{i=1}^n (y_i^2-4)^2=Q(2)\cdot Q(-2) . 
\]

 \begin{prop} \label{P:separable} Let $P(x)\in K[x]$ be a palindromic polynomial of degree $2n$. Let $Q(y)$ be the polynomial of degree $n$ obtained by the palindromic reduction from $P(x)$. Let $\Delta$ be the discriminant of $Q(x)$.
The polynomial $P(x)$ is separable if and only if $\Delta \neq 0$, and $\Delta'\neq 0$. 
\end{prop}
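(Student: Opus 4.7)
The plan is to characterize separability in terms of collisions among the $2n$ roots $x_1^{\pm 1},\ldots,x_n^{\pm 1}$ and to show that the two possible types of collision are detected exactly by $\Delta$ and $\Delta'$ respectively.

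First I would observe that $P(x)$ is separable if and only if the multiset of its roots $\{x_1,x_1^{-1},\ldots,x_n,x_n^{-1}\}$ consists of $2n$ pairwise distinct elements (working in an algebraic closure of $K$). A collision can occur in one of two ways: either $x_i=x_i^{-1}$ for some index $i$ (a root coincides with its own inverse), or $x_i \in \{x_j,x_j^{-1}\}$ for some $i\neq j$ (two roots from different pairs coincide).

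Next I would translate each type of collision into a condition on $\Delta$ or $\Delta'$. The relation $x+x^{-1}=y$ together with the substitution defining $Q(y)$ shows that two distinct pairs $\{x_i,x_i^{-1}\}$ and $\{x_j,x_j^{-1}\}$ meet if and only if $y_i=y_j$; thus absence of inter-pair collisions is equivalent to $y_1,\ldots,y_n$ being pairwise distinct, i.e.\ to $\Delta\neq 0$. The intra-pair collision $x_i=x_i^{-1}$ is equivalent to $x_i-x_i^{-1}=0$, and by the explicit formula
\[
\Delta'=\prod_{i=1}^n (x_i-x_i^{-1})^2
\]
established just above the statement, absence of such collisions is equivalent to $\Delta'\neq 0$. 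Combining these two equivalences gives the biconditional of the proposition.

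This argument is entirely elementary, so there is no real obstacle; the only point worth checking carefully is that the two kinds of collision are genuinely disjoint phenomena, so that one must assume both $\Delta\neq 0$ and $\Delta'\neq 0$ (neither alone suffices: e.g.\ $P(x)=(x-1)^2(x+1)^2$ has $\Delta\neq 0$ but $\Delta'=0$, while $P(x)=(x-a)^2(x-a^{-1})^2$ with $a\neq\pm 1$ has $\Delta'\neq 0$ but $\Delta=0$). With the two conditions combined, both failure modes are excluded, and $P(x)$ is separable.
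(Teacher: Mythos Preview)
Your argument is correct and follows essentially the same approach as the paper: classify possible coincidences among the roots into intra-pair ($x_i=x_i^{-1}$, detected by $\Delta'=0$) and inter-pair ($y_i=y_j$, detected by $\Delta=0$). Your write-up is in fact more complete than the paper's, which only spells out the direction ``not separable $\Rightarrow$ $\Delta=0$ or $\Delta'=0$'' and leaves the converse and the independence examples implicit.
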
 
\begin{proof} The roots of  of $P(x)$ come in pairs $(x_i, 1/x_i)$. If $P(x)$ is not separable then $x_i=1/x_i$ for some $i$ or 
$x_i= x_j$ (or $=1/x_j$) for $i\neq j$. If the first case $x_i=\pm 1$ and $2$ or $-2$ is a root of $Q(y)$. In the second case $y_i=y_j$, hence 
$\Delta =0$. 
\end{proof} 

Assume now that $K$ is a number field and $A$ its ring of integers. If ${\mathfrak p}$ is a maximal ideal, let $A_{\mathfrak p}$ be the localization of $A$ by $A\setminus {\mathfrak p}$. 

\begin{cor} \label{C:ramification} Assume that $K$ is a number field. Let ${\mathfrak p}$ be a prime such that  $P(x)\in A_{\mathfrak p}[x]$. If $\Delta$ and $\Delta'$ are in  $A_{\mathfrak p}^{\times}$  then the splitting field of $P(x)$ is unramified at ${\mathfrak p}$.  
\end{cor}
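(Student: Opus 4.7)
The plan is to show that the hypotheses force the reduction $\bar P(x)\in k_{\mathfrak p}[x]$ to be separable, and then to invoke the standard principle that a monic polynomial whose residue reduction is separable generates an unramified splitting field at $\mathfrak p$.

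First I would verify that the palindromic reduction commutes with reduction mod $\mathfrak p$. The coefficients of $Q(y)$, the discriminant $\Delta$ and the product $\Delta'=Q(2)Q(-2)$ are all fixed universal polynomial expressions in the coefficients of $P(x)$. Since $P(x)\in A_{\mathfrak p}[x]$, these expressions lie in $A_{\mathfrak p}$, and their images in $k_{\mathfrak p}$ are the analogous quantities attached to the reduced palindromic polynomial $\bar P(x)$. The hypothesis $\Delta,\Delta'\in A_{\mathfrak p}^{\times}$ therefore yields nonzero residues. Although Section 5 is nominally stated in characteristic zero, the argument of Proposition \ref{P:separable} is characteristic-free: a repeated root of $P$ forces either $x_{i}=1/x_{i}$ (so $x_{i}=\pm1$ and $Q(\pm2)=0$, i.e.\ $\Delta'=0$) or $x_{i}=x_{j}$ or $x_{i}=1/x_{j}$ for $i\neq j$ (both yielding $y_{i}=y_{j}$ and hence $\Delta=0$). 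Applying this over $k_{\mathfrak p}$ shows that $\bar P(x)$ is separable.

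Second I would pass to the completion $K_{\mathfrak p}$ and use Hensel's lemma to lift the factorization of $\bar P(x)$ into distinct monic irreducibles in $k_{\mathfrak p}[x]$ to a factorization in the completion of $A_{\mathfrak p}[x]$. Each irreducible factor of $\bar P(x)$ cuts out an unramified extension of $k_{\mathfrak p}$, so the corresponding lifted factor generates an unramified extension of $K_{\mathfrak p}$ of the same degree. Adjoining all roots of $P(x)$ to $K_{\mathfrak p}$ therefore yields a compositum of unramified extensions, which is itself unramified. This shows that every prime of the splitting field above $\mathfrak p$ is unramified, which is what is claimed.

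There is no serious obstacle here; the only point requiring a moment of care is checking that the separability criterion of Proposition \ref{P:separable} remains valid over the residue field (in particular, in residue characteristic $2$, where $2=-2$, the case $x_{i}=\pm1$ still forces $Q(2)=0$ and hence $\Delta'=0$), after which the corollary reduces to a textbook fact about unramified extensions of local fields.
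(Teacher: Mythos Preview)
Your argument is correct and is exactly the intended one: the paper states this as an immediate corollary of Proposition~\ref{P:separable} with no further proof, and your write-up supplies precisely the details the paper leaves implicit (reduction of $\Delta,\Delta'$ mod $\mathfrak p$, separability of $\bar P$ via the characteristic-free version of Proposition~\ref{P:separable}, and the standard Hensel/unramified-extension conclusion). Nothing to add.
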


\vskip 5pt 

Assume now that $K$ is a totally real number field and  that all roots of $Q(y)$ are in the interval $(-2,2)$. Then, and only then, 
the roots of $P(x)$ are pairs of complex conjugates on the unit circle. Note that the complex conjugation is given by 
$c=(-1,\ldots , -1)\in C_2^n$, and it is a central element in $\Gal(E/K)$.

\begin{prop} \label{palin} Assume that $K$ is a totally real field. 
Assume that the Galois group of $Q(y)$ is $S_n$, and the roots  are in the interval $(-2,2)$. Assume that $\Delta'$ is not a square in $K$ 
and $K(\sqrt{\Delta}) \neq K(\sqrt{\Delta'})$. Then the Galois group of $P(x)$ is 
isomorphic to the Weyl group of type $BC_n$ or, if $n$ is odd, to the 
direct product $\langle c\rangle\times S_n$ where $c$ is the complex conjugation.  
\end{prop}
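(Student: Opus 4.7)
The plan is to analyze $G:=\Gal(E/K)$ as a subgroup of $W:=C_2^n\rtimes S_n$ via the normal subgroup $H:=G\cap C_2^n$. By hypothesis $\Gal(F/K)=S_n$, so one has a short exact sequence $1\to H\to G\to S_n\to 1$; because $G$ surjects onto $S_n$, the subgroup $H$ is $S_n$-stable inside $C_2^n$, i.e.\ an $\bbF_2[S_n]$-submodule of the permutation module $\bbF_2^n$. The assumption that the roots of $Q(y)$ lie in $(-2,2)$ places complex conjugation $c=(-1,\ldots,-1)$ in $G$, hence in $H$.

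The first step is to classify the $S_n$-invariant subspaces of $\bbF_2^n$ containing $c$. Writing $H_0$ for the augmentation subspace $\{(a_1,\ldots,a_n):a_1+\cdots+a_n=0\}$, a short module-theoretic argument using the decomposition of the permutation module over $\bbF_2$ (together with hand treatment of the small cases where $H_0=\langle c\rangle$) shows that the only candidates are $\langle c\rangle$, $H_0$ (which contains $c$ precisely when $n$ is even), and $C_2^n$.

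The second step uses the two quadratic characters $\epsilon$ and $\epsilon'$ to discard cases. The assumption $\Gal(F/K)=S_n$ makes $\epsilon$ nontrivial on $G$, the assumption $\sqrt{\Delta'}\notin K$ makes $\epsilon'$ nontrivial, and $K(\sqrt{\Delta})\neq K(\sqrt{\Delta'})$ forces them to be distinct. When $n$ is even one has $\epsilon'(c)=(-1)^n=1$, so $\epsilon'|_{H_0}=1$; if $H\subseteq H_0$ then $\epsilon'$ descends to a nontrivial quadratic character of $G/H=S_n$, which must be $\epsilon$, a contradiction. Hence for $n$ even one is forced to $H=C_2^n$. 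When $n$ is odd, $\epsilon'(c)=-1$ and this obstruction vanishes, leaving both $H=\langle c\rangle$ and $H=C_2^n$ as possibilities.

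Finally I would identify $G$ in each surviving case. If $H=C_2^n$ then $G$ contains the kernel of $W\to S_n$ and surjects onto $S_n$, forcing $G=W$. If $H=\langle c\rangle$ (only for $n$ odd), I would consider $\Phi:W\to\{\pm1\}\times S_n$, $(a,\sigma)\mapsto(\epsilon'(a),\sigma)$; this is a homomorphism because $\epsilon'$ is $S_n$-invariant, with kernel $H_0$. For $n$ odd one has $\langle c\rangle\cap H_0=1$, so $\Phi|_G$ is injective, and the order count $|G|=2\cdot n!=|\{\pm1\}\times S_n|$ upgrades it to an isomorphism, identifying $G$ with $\langle c\rangle\times S_n$. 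I expect the main obstacle to be the $\bbF_2[S_n]$-module classification of step one; thereafter the argument is essentially bookkeeping with the two quadratic characters and a comparison of orders.
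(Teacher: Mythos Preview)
Your proof is correct and follows the same line as the paper's: show $\epsilon'|_H\neq 1$ from the hypotheses on $\Delta,\Delta'$, conclude that $H=C_2^n$ or (for $n$ odd) $H=\langle c\rangle$, and split the latter extension using $\epsilon'$. The only difference is cosmetic: where you classify all $\bbF_2[S_n]$-submodules of $C_2^n$ containing $c$, the paper states the equivalent one-line fact that any element of $C_2^n\setminus\ker\epsilon'$ other than $c$ (for $n$ odd) generates $C_2^n$ as an $S_n$-module, which sidesteps the full submodule classification you flagged as the main obstacle.
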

\begin{proof} If the restriction of $\epsilon'$ to $\Gal(E/F)$ is trivial, then it induces a non-trivial character of $S_n$. But $\epsilon$ is the unique character of $S_n$, 
thus $\epsilon'=\epsilon$, a contradiction to the assumption on the quadratic fields. 
Now we can finish easily. Note that $C_2^n$ is $S_n$-generated by any element outside the 
kernel of $\epsilon'$ except when $n$ is odd and the element is $c$. Thus either $\Gal(E/F)=C_2^n$ and $\Gal(E/K)$ is the Weyl group of type $BC_n$ or 
$\Gal(E/K)$ is the extension of $S_n$ by $\langle c\rangle$. But this extension must split, as given by $\epsilon'$. 
\end{proof}

Let $P_i(x)\in K[x]$, $i=1,2$ be two palindromic polynomials satisfying the conditions of 
 Proposition \ref{palin}, and $\Delta_i$, $\Delta'_i$, the two discriminants. 

\begin{cor} \label{Cor1}
Let $E_1$ and $E_2$,  be the splitting fields of $P_1[x]$ 
and  $P_2[x]$, respectively. Then $E_1$ and $E_2$ 
are algebraically independent over $K$ if and only if the bi-quadratic fields 
$K(\sqrt{\Delta_1}, \sqrt{\Delta_1'})$ and $K(\sqrt{\Delta_2}, \sqrt{\Delta_2'})$ are. 
\end{cor}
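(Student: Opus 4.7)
The plan is to identify $L_i = K(\sqrt{\Delta_i}, \sqrt{\Delta_i'})$ as the maximal abelian subextension of $E_i/K$. By Proposition \ref{palin}, $G_i := \Gal(E_i/K)$ is either the Weyl group $W(BC_{n_i}) = C_2^{n_i} \rtimes S_{n_i}$ or, when $n_i$ is odd, the direct product $\langle c \rangle \times S_{n_i}$. In either case a direct computation gives $G_i^{\mathrm{ab}} \cong C_2 \times C_2$, with the two quadratic characters given by $\epsilon$ and $\epsilon'$, whose fixed fields are $K(\sqrt{\Delta_i})$ and $K(\sqrt{\Delta_i'})$. Hence $L_i$ is precisely $E_i^{[G_i, G_i]}$, and $\Gal(L_i/K) = G_i^{\mathrm{ab}}$.

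One direction is immediate: $L_i \subseteq E_i$, so $E_1 \cap E_2 = K$ forces $L_1 \cap L_2 = K$. For the converse, assume $L_1 \cap L_2 = K$ and set $M = E_1 \cap E_2$, which is Galois over $K$. The key observation is that $M \cap L_1$ is an abelian extension of $K$ contained in $M \subseteq E_2$, hence by the maximality of $L_2$ it lies in $L_2$. So $M \cap L_1 \subseteq L_1 \cap L_2 = K$. Translated through Galois theory, if $N_1 = \Gal(E_1/M)$, this says $N_1 \cdot [G_1, G_1] = G_1$, i.e., $\Gal(M/K) \cong G_1/N_1$ is a perfect group. It therefore suffices to show that neither $W(BC_{n})$ nor $\langle c \rangle \times S_{n}$ admits a nontrivial perfect quotient.

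For the direct product $\langle c \rangle \times S_n$ this is clear: every quotient splits as a product of quotients of $C_2$ and $S_n$, and neither factor has a nontrivial perfect quotient. For $G = W(BC_n) = C_2^n \rtimes S_n$, consider a surjection $\phi \colon G \twoheadrightarrow H$ with $H$ perfect, and examine $\phi(S_n)$, which is $1$, $C_2$, or $S_n$. In the first two cases, the normality of $\ker \phi$ combined with the identity $[(a,1), (0,\sigma)] = (a + \sigma(a), 1)$ (taking $\sigma$ a 3-cycle in the $C_2$ case, or any $\sigma \in S_n$ in the trivial image case) forces $E \subseteq \ker \phi$, hence $\ker \phi \supseteq E \rtimes A_n = [G, G]$. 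Then $H$ is a quotient of $G^{\mathrm{ab}} = C_2 \times C_2$, and being abelian and perfect, $H = 1$. In the third case (which can only occur for $n \geq 5$ in the perfect scenario), $\phi(S_n) \cap \phi(C_2^n)$ is an abelian normal subgroup of $\phi(S_n) \cong S_n$; since the only abelian normal subgroup of $S_n$ for $n \geq 5$ is trivial, we obtain $H = \phi(C_2^n) \rtimes \phi(S_n)$, and $H$ surjects onto $S_n$, hence onto $C_2 = S_n^{\mathrm{ab}}$, contradicting $H^{\mathrm{ab}} = 1$. For $n \leq 4$, $W(BC_n)$ is solvable and the conclusion is immediate.

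The main technical step is the last case analysis for $W(BC_n)$ with $n \geq 5$: the conjugation identity in $C_2^n \rtimes S_n$ is the essential computational input, and one must invoke the simplicity of $A_n$ (equivalently, the absence of nontrivial abelian normal subgroups of $S_n$) to rule out an injective lift of $S_n$ into a perfect quotient of $G$.
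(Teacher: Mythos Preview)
Your approach is essentially the paper's: both reduce the converse direction to the group-theoretic fact that $G_i$ (whether $W(BC_{n_i})$ or $\langle c\rangle\times S_{n_i}$) admits no nontrivial perfect quotient. The paper phrases this as ``every proper normal subgroup is contained in the kernel of one of $\epsilon,\epsilon',\epsilon\epsilon'$'' and asserts it without proof; you supply a detailed verification via the maximal-abelian-subextension reformulation.

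One small correction: your claim that ``every quotient of $\langle c\rangle\times S_n$ splits as a product of quotients of $C_2$ and $S_n$'' is false. For instance, the kernel of the character $\epsilon\epsilon'$ is $\{(1,\sigma):\sigma\in A_n\}\cup\{(c,\sigma):\sigma\notin A_n\}$, which is not a product subgroup. The desired conclusion is still correct and the repair is immediate: if $H$ is a perfect quotient then $H=\phi([G,G])=\phi(1\times A_n)$, so $H$ is a quotient of $A_n$; for $n\le 4$ this is solvable, while for $n\ge 5$ simplicity of $A_n$ forces $H\cong A_n$ or $H=1$, and the former is impossible since it would make $\phi(c)$ central in $A_n$, hence trivial, yielding a surjection $S_n\twoheadrightarrow A_n$. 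Also, the symbol $E$ in your line ``$E\rtimes A_n=[G,G]$'' should denote the sum-zero hyperplane in $C_2^n$ (not all of $C_2^n$, and not the splitting field from earlier in the section); with that reading your $W(BC_n)$ argument is correct.
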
 
\begin{proof} Since $E_1$ and $E_2$ are Galois, it suffices to show that $E_1\cap E_2=K$. 
Note that $E_1\cap E_2$ is Galois in both $E_1$ and $E_2$. 
One easily sees that any non-trivial normal subgroup 
of the semi-direct product of $C_2^n$ and $S_n$ (and of $\langle c\rangle\times S_n$) 
is contained in a kernel of one of the three 
characters: $\epsilon,\epsilon'$ and $\epsilon\epsilon'$. Thus $E_1\cap E_2$, if strictly bigger than 
$K$, must contain a quadratic field common to $K(\sqrt{\Delta_1}, \sqrt{\Delta_1'})$ and $K(\sqrt{\Delta_2}, \sqrt{\Delta_2'})$. 
\end{proof}

For applications to $G_2$ we note the following corollary. 
 
\begin{cor} \label{Cor2}
Assume that $n=3$, $P(x)$ satisfies the conditions of Proposition \ref{palin}, 
 and that the roots $x_1^{\pm 1},x_2^{\pm 1},x_3^{\pm 1}$ of $P(x)$ satisfy 
$x_1 x_2 x_3=1$. Then the Galois group of $P(x)$ is isomorphic 
to $\langle c\rangle\times S_3\cong D_6$, the dihedral group of order $12$. 
\end{cor}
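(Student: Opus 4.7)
The plan is to combine Proposition~\ref{palin} with the extra multiplicative relation $x_1 x_2 x_3 = 1$ to collapse the two possibilities to the smaller one. By Proposition~\ref{palin}, since $n = 3$ is odd, $\Gal(E/K)$ is isomorphic either to the Weyl group $W(BC_3)$ of order $48$ (in which case $\Gal(E/F) = C_2^3$) or to $\langle c\rangle \times S_3 \cong D_6$ of order $12$ (in which case $\Gal(E/F) = \langle c\rangle$). Thus it suffices to show $\Gal(E/F) \subseteq \langle c\rangle$.

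An element $\tau \in \Gal(E/F)$ necessarily acts by $x_i \mapsto x_i^{a_i}$ for some $(a_1, a_2, a_3) \in \{\pm 1\}^3$, since $\tau$ fixes each $y_i = x_i + x_i^{-1}$ and hence preserves each pair $\{x_i, x_i^{-1}\}$. The relation $x_1 x_2 x_3 = 1$ lies in $K$, so applying $\tau$ gives $x_1^{a_1} x_2^{a_2} x_3^{a_3} = 1$; dividing by the original relation yields
\[
\prod_{i : a_i = -1} x_i^2 = 1.
\]
If the set $T = \{i : a_i = -1\}$ has size $1$, then $x_i^2 = 1$ for that index. If $|T| = 2$, say $T = \{j, k\}$ with $l$ the remaining index, then $(x_j x_k)^2 = 1$, and combined with $x_j x_k x_l = 1$ this forces $x_l^2 = 1$. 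Either way, some root of $P(x)$ equals $\pm 1$, so $\pm 2$ is a root of $Q(y)$, forcing $\Delta' = Q(2)Q(-2) = 0$. This contradicts the hypothesis inherited from Proposition~\ref{palin} that $\Delta'$ is not a square in $K$ (since $0$ is a square).

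Therefore $|T| \in \{0, 3\}$, so $\tau \in \{1, c\}$ and $\Gal(E/F) = \langle c\rangle$. Hence $\Gal(E/K) \cong \langle c\rangle \times S_3 \cong D_6$. The only mildly delicate step is the two-negative case, which requires invoking both $x_1 x_2 x_3 = 1$ and separability simultaneously; the single-negative case and the conclusion are then immediate.
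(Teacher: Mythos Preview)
Your proof is correct and follows the same approach as the paper: invoke Proposition~\ref{palin} to reduce to the two possibilities of order $48$ or $12$, then use the relation $x_1x_2x_3=1$ to exclude the larger one. The paper's own argument is a one-line remark that ``the relation between the roots implies that the degree of $E$ cannot be $48$''; your case analysis on $|T|$ is exactly the detail that makes that sentence rigorous, and your appeal to $\Delta'\neq 0$ (equivalently, that no root of $Q$ equals $\pm 2$, which is already forced by the hypothesis that the roots lie in $(-2,2)$) is the right way to close each case.
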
 
\begin{proof} Indeed, the relation between the roots implies that the degree of 
$E$ cannot be $48$. Thus the Galois group must be $\langle c\rangle\times S_3$.
\end{proof}

\section{Generating $G_2$}

We realize the Chevalley group of type $G_2$ on its 7-dimensional representation $V_7$.  In particular, a conjugacy class 
gives rise the a characteristic polynomial $P_C(x)$ of degree 6. It is a palindromic polynomial  whose roots satisfy the properties as in 
Corollary \ref{Cor2}. 

\vskip 5pt 

Maximal subgroups of $G_2(p)$ have been classified by Aschbacher.  We extract the information from Corollary 11 on page 199 in \cite{As}.

\begin{theorem} Assume $p>3$. 
The maximal subgroups of $G_2(p)$ are as follows.
\begin{enumerate}
\item maximal parabolic subgroups.
\item $SL_3(p).2$ and $SU_3(p).2$.
\item $SO_{4}^{+}(p)$.
\item $PGL_2(p)$ if $p>5$, acting on $V_{7}$ like on homogeneous polynomials in two 
variables of degree $6$.
\item $2^3.L_3(2)$, the stabilizer of an orthonormal basis of $V_{7}$; the order is $2^{6}\cdot 3\cdot 7$. 
\item $L_2(13)$ if $\bbF_p$ is a splitting field for $T^2 -13$; the order is $2^{2}\cdot3\cdot 7\cdot 13$.
\item $G_2(2)$; the order is $2^{6}\cdot 3^{3}\cdot 7$. 
\item $L_2(8)$ if  $\bbF_p$ is a splitting field for $T^2 -3T+1$; the order is $2^{3}\cdot 3^{2}\cdot 7$. 
\item $J_1$ if $p =11$; the order is $2^{3}\cdot 3\cdot 5\cdot 7\cdot 11\cdot 19$. 
\end{enumerate}
\end{theorem}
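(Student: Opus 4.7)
The plan is to deduce this statement directly from Aschbacher's classification in \cite[Corollary 11, p.~199]{As}. Aschbacher's corollary lists all maximal subgroups of $G_2(q)$ for the Chevalley group of type $G_2$ over $\bbF_q$ (with mild restrictions on $q$); specializing to $q=p$ a prime with $p>3$ removes the field-extension subgroups $G_2(p_0)$ with $p_0^a=p$, and what remains is precisely the list (1)--(9). So the ``proof'' is really a translation: matching each item of Aschbacher's list against items (1)--(9), and verifying the orders and the field conditions in our notation.

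First I would address the ``generic'' families (1)--(4). The maximal parabolics and the subsystem subgroups $SL_3(p).2$, $SU_3(p).2$ (from the long-root $A_2$, twisted or untwisted) and $SO_4^+(p)$ (from the $A_1\times A_1$ subsystem) are the standard subgroups of maximal rank and appear directly in Aschbacher's list. The irreducible $PGL_2(p)$ in (4) corresponds to the principal $\mathfrak{sl}_2$-triple in the Lie algebra of $G_2$; its action on $V_7$ is $\operatorname{Sym}^6$ of the standard representation, and the condition $p>5$ is needed so that $\operatorname{Sym}^6$ remains irreducible and faithful on $PGL_2(p)$.

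Next I would handle the small exceptional embeddings (5)--(9). For each of $2^3.L_3(2)$, $G_2(2)$, $L_2(13)$, $L_2(8)$ and $J_1$ the order is read off from standard character tables, and the condition on $p$ is precisely the condition for the relevant Brauer character to lift to an absolutely irreducible $7$-dimensional representation over $\bbF_p$. For $L_2(13)$ and $L_2(8)$ the character of the $7$-dimensional representation takes values in $\bbZ[\sqrt{13}]$ and $\bbZ[\zeta_9+\zeta_9^{-1}]$ respectively; the minimal polynomials of the corresponding algebraic integers are $T^2-13$ and $T^2-3T+1$, and the representation is realized over $\bbF_p$ exactly when these polynomials split modulo $p$. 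The $J_1$ embedding exists only at $p=11$, a classical fact going back to Janko, and is stated in Aschbacher's corollary.

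The only real obstacle is bookkeeping: Aschbacher states his classification in a slightly different form (grouping some cases together, and using different names for subgroups), so the task is to verify carefully that no case of his list has been dropped or duplicated in the translation. Since each of the conditions on $p$ and each of the orders is either computed in \cite{As} or immediate from the character theory of the finite group in question, no new mathematical content is required.
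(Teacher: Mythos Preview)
Your approach is exactly the paper's: the theorem is stated without proof and attributed directly to Aschbacher, with the preceding sentence ``We extract the information from Corollary 11 on page 199 in \cite{As}.'' Your additional commentary on the orders and the splitting-field conditions is helpful elaboration, but the paper itself offers none of it and simply quotes the classification.
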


\smallskip

\begin{cor} \label{C:generators} Assume that $p>3$. 
Let $u$ and $t$ be two elements in $G_2(p)$ of orders  $>3$. If 
 the order of $u$ divides $p^2+ p + 1$ and the order of $t$ divides $p^2-p+1$ 
then  $u$ and $t$ are not contained in any maximal subgroup except, perhaps, the five
groups  of bounded order labeled (5) - (9). 
\end{cor}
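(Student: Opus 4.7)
The plan is to go through each of the maximal subgroup classes (1)--(4) of the theorem and show that at least one of $u,t$ cannot lie in it; then only the bounded-order classes (5)--(9) remain.

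The main numerical input consists of the elementary Euclidean estimates
\[
\gcd(p^2+p+1,\,p^2-1) = \gcd(p+2,\,3)\in\{1,3\}, \qquad \gcd(p^2-p+1,\,p^2-1) = \gcd(p-2,\,3)\in\{1,3\},
\]
together with the observation that $p^2+p+1$ and $p^2-p+1$ are both odd, coprime to $p$, and mutually coprime (their difference is $2p$). It follows that any element of order $>3$ whose order divides one of $p^2\pm p+1$ has order coprime to $p$ and not dividing $p^2-1$.

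Cases (1), (3), and (4) then fall simultaneously: in each of these subgroups the semisimple element orders divide $p^2-1$. For a maximal parabolic this is because every semisimple class is conjugate into the Levi, which is essentially $GL_2(p)$; for $SO_4^+(p)\cong SL_2(p)\circ SL_2(p)$ it follows by taking least common multiples of orders in the two $SL_2(p)$ factors; for $PGL_2(p)$ it is immediate from the split and non-split torus orders $p-1$ and $p+1$. By the estimates above, neither $u$ nor $t$ lies in any of these subgroups.

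The main obstacle is case (2), because $SL_3(p)$ does contain a Singer cycle of order $p^2+p+1$, a priori accommodating $u$, and $SU_3(p)$ one of order $p^2-p+1$, a priori accommodating $t$; the argument must be asymmetric. To rule out $t\in SL_3(p).2$, I would use $p^3-1=(p-1)(p^2+p+1)$ together with $p^2-p+1\equiv 1\pmod{p-1}$ and $\gcd(p^2-p+1,\,p^2+p+1)\mid 2p=1$ to conclude $\gcd(p^2-p+1,\,p^3-1)=1$; combined with $\gcd(p^2-p+1,\,p^2-1)\le 3$, this exhausts all semisimple element orders in $SL_3(p)$, and the outer $.2$ contributes only elements of even order, which $t$ (of odd order $>3$) is not. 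A dual calculation using $p^3+1=(p+1)(p^2-p+1)$ yields $\gcd(p^2+p+1,\,p^3+1)=1$ and rules out $u\in SU_3(p).2$. Either subcase of (2) thus excludes at least one of $u,t$, completing the elimination.
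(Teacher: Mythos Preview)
Your proof is correct and follows essentially the same approach as the paper: both eliminate the maximal subgroups (1)--(4) by exploiting that $\Phi_3(p)=p^2+p+1$ and $\Phi_6(p)=p^2-p+1$ are odd, prime to $p$, and have gcd at most $3$ with each other and with $\Phi_1(p)$, $\Phi_2(p)$. The only difference is presentational---the paper invokes Lagrange on the group orders $|SL_3(p)|_{p'}$, $|SU_3(p)|_{p'}$, $|SO_4^+(p)|_{p'}$, $|GL_2(p)|_{p'}$ directly, while you argue via the possible semisimple element (torus) orders in each group; and where the paper tracks only $t$ through (1)--(4) and then excludes $u$ from $SU_3(p).2$, you symmetrically exclude both $u$ and $t$ from (1), (3), (4) and then handle the two halves of (2) separately. (One notational slip: ``$\gcd(p^2-p+1,p^2+p+1)\mid 2p=1$'' should read that the gcd divides $2p$ and hence equals $1$.)
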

\begin{proof} The proof is based on the theorem of Lagrange. Let $\Phi_n(x)$ be the $n$-th cyclotomic polynomial. In particular, 
\[ 
\Phi_3(p)=p^2+p+1\text{ and } \Phi_6(p)=p^2-p+1. 
\] 
 For any finite group $G$, let 
$|G|_{p'}$ be the prime to $p$ part of the order of $G$. 
Relevant to us are the following orders: 
\[ 
\begin{array}{|c||c|c|c|c|}
\hline  G  & SL_3(p)  & SU_3(p)  & SO_4^+(p) & GL_2(p) \\
\hline  |G|_{p'} & \Phi_1(p)^2 \Phi_3(p)  &  \Phi_1(p)\Phi_2(p) \Phi_6(p) & \Phi_1(p)^2\Phi_2(p)^2 &  \Phi_1(p)^2\Phi_2(p) \\
\hline
\end{array} 
\]  
Let $n=p^2+ap+1$ and $m=p^2+ap+1$ where $a,b$ are integers. Then $n-m=(a-b)p$. Hence the gcd of $n$ and $m$ is a divisor of $a-b$. 
This observation implies that $\Phi_1(p)^2$, $\Phi_2(p)^2$, $\Phi_3(p)$ and $\Phi_6(p)$ are essentialy pairwise relatively prime. Note that 
$\Phi_3(p)$ and $\Phi_6(p)$ are also odd.  Hence, by the theorem of Lagrange,  $t$ is contained in $SU_3(p). 2$ and in no other maximal 
subgroups labeled (1)-(4). However,  $u$ cannot be contained in $SU_3(p).2$. This proves the corollary. 
\end{proof} 

\begin{theorem} \label{main2}
Let $A$ and $B$ be two semi-simple, regular, rational conjugacy classes of $G_2(\mathbb Q)$. Assume that 
 the splitting fields of the characteristic polynomials $P_A(x)$ and $P_B(x)$  are algebraically independent and the Galois groups are isomorphic to $D_6$. 
Assume that for almost all primes we are given $a_p\in A(\mathbb Q_p)$,  $b_p\in B(\mathbb Q_p)$ and a maximal compact  
subgroup $U_p$ in $G_2(\mathbb Q_p)$ containing $a_p$ and $b_p$. Then, for a set of primes of density at least 1/18, the 
group $U_p$ is hyperspecial and the projections  of $a_p$ and $b_p$  generate the reductive quotient  of $U_p$ isomorphic to $G_2(p)$. 
\end{theorem}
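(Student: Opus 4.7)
The plan is to apply Chebotarev's density theorem to the compositum $E_A E_B$ and combine the torus-reduction machinery of Sections~2--4 with the subgroup criterion of Corollary~\ref{C:generators}. Since $E_A$ and $E_B$ are algebraically independent over $\mathbb Q$ with Galois groups both isomorphic to $D_6$, we have $\Gal(E_A E_B/\mathbb Q)\cong D_6\times D_6$. The conjugacy class table of Section~3 gives $|\mathrm{3a}|=|\mathrm{6a}|=2$, so by Chebotarev the set of primes $p$ unramified in $E_A E_B$ for which $(\varphi_A(\Fr_p),\varphi_B(\Fr_p))$ lies in $(\mathrm{3a},\mathrm{6a})$ or $(\mathrm{6a},\mathrm{3a})$ has density $\tfrac{2\cdot 2\cdot 2}{12^2}=\tfrac{1}{18}$.

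For such a prime $p$ (outside a finite bad set), Proposition~\ref{P:mini_torus} places $\bar a_p$ in the finite twisted torus $T_{\mathrm{3a}}(\mathbb F_p)$ and $\bar b_p$ in $T_{\mathrm{6a}}(\mathbb F_p)$, of orders $\Phi_3(p)$ and $\Phi_6(p)$ respectively (or with the roles of $A$ and $B$ swapped). Because $D_6$ is non-abelian while any splitting field generated by roots of unity is abelian over $\mathbb Q$, no root of $P_A$ or $P_B$ is a root of unity, so Proposition~\ref{torsion} applies. Choosing $m=M_0$ to exceed both the largest element order occurring in the maximal subgroups (5)--(9) of Aschbacher's list and the constant $9$, one deduces that for all but finitely many $p$ the orders of $\bar a_p$ and $\bar b_p$ exceed $M_0$; in particular both exceed $3$, and neither element can fit inside a subgroup of type (5)--(9).

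The subtlest step is to force $U_p$ to be hyperspecial. Its reductive quotient $M_p$ is a connected reductive group over $\mathbb F_p$ of rank at most $2$, and an inspection of the affine Dynkin diagram of $G_2$ shows that the only non-hyperspecial options have reductive quotients of type $A_2$ or $A_1\times\tilde A_1$, with prime-to-$p$ group orders proportional to $\Phi_1(p)^2\Phi_2(p)\Phi_3(p)$ or $\Phi_1(p)^2\Phi_2(p)^2$. By Proposition~\ref{P:order} the prime-to-$p$ order of the projection of $b_p$ to $M_p$ coincides with that of $\bar b_p$ in $\bar B$, which divides $\Phi_6(p)$ and exceeds $M_0$. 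An elementary cyclotomic gcd computation gives $\gcd(\Phi_6(p),\Phi_i(p))\le 3$ for odd $p$ and $i\in\{1,2,3\}$, whence the gcd of $\Phi_6(p)$ with the prime-to-$p$ part of any non-hyperspecial $|M_p|$ is at most $9$. Since $M_0>9$, the divisor of $\Phi_6(p)$ realized by $\bar b_p$ cannot lie in such an $M_p$, so $M_p\cong G_2(\mathbb F_p)$ and $U_p$ is hyperspecial.

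With $U_p$ hyperspecial, the projections of $a_p$ and $b_p$ are exactly $\bar a_p,\bar b_p\in G_2(p)$, of orders exceeding $3$ and dividing $\Phi_3(p)$ and $\Phi_6(p)$ respectively. Corollary~\ref{C:generators} then rules out maximal subgroups of type (1)--(4), and the order lower bound $M_0$ of Step~2 rules out (5)--(9); thus $\bar a_p$ and $\bar b_p$ generate $G_2(p)$. The main obstacle in the plan is the hyperspecial step, since it relies on identifying the non-hyperspecial parahorics of $G_2(\mathbb Q_p)$ explicitly and on the cyclotomic gcd bound; the rest is a straightforward application of Chebotarev together with the machinery already developed.
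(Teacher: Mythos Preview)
Your proof is correct and follows essentially the same route as the paper's: Chebotarev on $E_AE_B$ to get density $1/18$, Propositions~\ref{P:mini_torus} and~\ref{P:order} to control the orders of the reductions, Proposition~\ref{torsion} to make those orders large, identification of the non-hyperspecial reductive quotients as $SL_3(p)$ and $SO_4^+(p)$ to force $U_p$ hyperspecial, and Corollary~\ref{C:generators} to finish. You are in fact more explicit than the paper in two places---the argument that no root of $P_A$ or $P_B$ is a root of unity (via the non-abelianness of $D_6$), and the cyclotomic $\gcd$ bounds together with the explicit elimination of the bounded groups (5)--(9)---but the overall architecture is the same; the only quibble is that early on you write ``$\bar a_p$ lies in $T_{3a}(\mathbb F_p)$'' when strictly it is the canonical reduction $\bar t\in\bar A$ that lies there, with Proposition~\ref{P:order} supplying the link to the projection of $a_p$, as you correctly invoke later.
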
 
\begin{proof} Let $\Lambda$ be a lattice, in the 7-dimensional representation of $G_2$, preserved by $U_p$. (See \cite{GY} for a beautiful description of maximal compact subgroups in $G_2(\mathbb Q_p)$ as stabilizers of orders in the octonion algebra.) Then $\Lambda/p\Lambda$ is a $U_p$-module. The group 
$U_p$ acts on a semi simplification of $\Lambda/p\Lambda$ through its reductive quotient: $G_2(p)$, $SL_3(p)$ or $SO_4(p)^+$. 
 Let $\bar a_p$  be the projection of $a_p$ to the reductive quotient of $U_p$. 
  Let $\bar A$  be the reduction modulo $p$ of the rational conjugacy class $A$.  By Proposition \ref{P:order} the prime to $p$-part of the order of 
$\bar a_p$ is the same as the order of elements in $\bar A$. 
 By Proposition \ref{P:mini_torus}, the order of elements in 
$\bar A$ divides the order of the finite torus $T_w$ where $w=\varphi_A(\Fr_p)$.  
If $\varphi_A(\Fr_p)=6a$,   the Coxeter  class, then the order elements in $\bar A$ 
divides $1-p+p^2$.  In addition, the order of  elements in $\bar A$ can be arranged to be non-trivial, in fact  as large as needed, by Proposition \ref{torsion}. 
This forces the reductive quotient of $U_p$ to be isomorphic to $G_2(p)$, i.e. $U_p$ is hyperspecial. Similarly if, in addition,  
 $\varphi_B(\Fr_p)=3a$ then 
$\bar a_p$ and $\bar b_p$ generate the quotient $G_2(p)$ by Corollary \ref{C:generators}. 
 By \v Cebotarev, $\varphi_A(\Fr_p)=6a$ for a set of primes of density 1/6 
and, independently,  $\varphi_B(\Fr_p)=3a$ for a set of primes of density 1/6.  Since the roles of $A$ and $B$ can be reversed, we get $G_2(p)$ for a set of primes of density 
\[
\frac{1}{6}\cdot \frac{1}{6} + \frac{1}{6}\cdot \frac{1}{6} = \frac{1}{18}. 
\] 
\end{proof}

\section{Representations attached to automorphic forms} 

Let $\bbA$ be the ring of adel\'es of $\bbQ$ and $\Pi$ a  cuspidal automorphic representation of $GL_{m}(\bbA)$ where $m=2n+1$. 
 Fix a prime $q$ and assume that $\Pi$ is unramified for all primes $\ell\neq q$.
 Let $R_{\ell}(x)$ denote the characteristic polynomial of the Satake parameter of 
 the local component $\Pi_{\ell}$.  Assume that $\Pi$ satisfies the following properties:
 \begin{enumerate}
 \item The infinitesimal character of $\Pi_{\infty}$ is the infinitesimal character of 
 the trivial representation of $GL_{m}(\bbR)$. 
 \item  $\Pi_{q}$ is the Steinberg representation. 
 \item $R_{\ell}(x)=P_{\ell}(x)\cdot (x-1)$, and $P_{\ell}(x)$ is a palindromic 
 polynomial with coefficients in $\bbZ[\frac{1}{\ell}]$. 
 \end{enumerate}
 These conditions mean that the local components are lifts from $Sp_{2n}$. In particular, $\Pi$ is self dual. Thus, by a result of Harris and Taylor \cite{HT}, for every prime 
 $p$ there exists a continuous representation 
  \[ 
 \rho_{p}: \Gal(\bar\bbQ/\bbQ) \rightarrow GL_{m}(\bar{\bbQ}_p)
 \]
 unramified at all primes $\ell\neq p,q$ such that $R_{\ell}(x)$ is the characteristic  polynomial of $\rho_{p}(\Fr_{\ell})$, where $\Fr_{\ell}$ is the Frobenius
 at $\ell$. Moreover, if $p\neq q$ then the image of the inertia subgroup $I_{q}$
 contains the regular unipotent class and  $\rho_p$ is irreducible by a result of Taylor and Yoshida \cite{TY}. 
 
 \begin{prop} If $p\neq q$ then  $\rho_{p}$ is defined over $\bbQ_{p}$, that is, we have 
   \[ 
 \rho_{p}: \Gal(\bar\bbQ/\bbQ) \rightarrow GL_{m}({\bbQ}_p). 
 \]
 \end{prop}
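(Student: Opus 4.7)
The plan is to reduce the existence of a $\bbQ_p$-model of $\rho_p$ to the vanishing of a Schur-type obstruction in $\mathrm{Br}(\bbQ_p)$, and then to kill this obstruction using the self-duality of $\Pi$ together with the parity of $m$.

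First I would check that the character $\chi = \mathrm{tr}\circ\rho_p$ takes values in $\bbQ_p$. For every prime $\ell\neq p,q$, $\rho_p$ is unramified at $\ell$ and $\chi(\Fr_\ell)$ equals, up to sign, the coefficient of $x^{m-1}$ in $R_\ell(x)$, which lies in $\bbZ[1/\ell]\subset\bbQ_p$ by hypothesis (3). By the \v{C}ebotarev density theorem the $\Fr_\ell$ form a dense subset of $\Gal(\bar\bbQ/\bbQ)$; since $\chi$ is continuous and $\bbQ_p$ is closed in $\bar\bbQ_p$, all values of $\chi$ lie in $\bbQ_p$.

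Next, fix a finite Galois extension $L/\bbQ_p$ over which $\rho_p$ is already defined. Because $\rho_p$ is absolutely irreducible (Taylor--Yoshida) and $\chi$ is $\Gal(L/\bbQ_p)$-invariant, for every $\sigma\in\Gal(L/\bbQ_p)$ the twist $\rho_p^\sigma$ is isomorphic to $\rho_p$, so there exists $A_\sigma\in GL_m(L)$, unique modulo $L^\times$ by Schur's lemma, with $\rho_p^\sigma=A_\sigma\rho_p A_\sigma^{-1}$. The cocycle relation $A_\sigma\cdot {}^\sigma\!A_\tau\equiv A_{\sigma\tau}\pmod{L^\times}$ then produces a class $[\rho_p]\in H^2(\Gal(L/\bbQ_p),L^\times)\subseteq \mathrm{Br}(\bbQ_p)$ whose vanishing is equivalent to the descent of $\rho_p$ to $GL_m(\bbQ_p)$.

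To kill $[\rho_p]$ I would exploit self-duality. Property (3) forces each $R_\ell(x)$ to be palindromic up to the factor $x-1$, so $\Pi\cong\Pi^\vee$ and hence $\rho_p\cong\rho_p^\vee$. A direct computation with $\rho_p^\vee(g)=\rho_p(g^{-1})^T$ and with the transposes of the cocycle matrices $A_\sigma$ shows that the Schur class of $\rho_p^\vee$ equals $-[\rho_p]$ in $\mathrm{Br}(\bbQ_p)$, while functoriality of the Schur class under the isomorphism $\rho_p\cong\rho_p^\vee$ gives $[\rho_p^\vee]=[\rho_p]$. Combining the two identities yields $2\cdot[\rho_p]=0$. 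Classical Schur-index theory independently gives $m\cdot[\rho_p]=0$. Since $m=2n+1$ is odd, $\gcd(2,m)=1$ and we conclude $[\rho_p]=0$, so $\rho_p$ admits a model over $\bbQ_p$.

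The main technical obstacle is the last step: carrying out the transpose-and-conjugate bookkeeping cleanly enough to identify $[\rho_p^\vee]$ with $-[\rho_p]$ and matching this with the functoriality identity $[\rho_p^\vee]=[\rho_p]$. This is essentially the content of the result of Taylor \cite{Ta} invoked in the introduction, and it relies crucially on the parity of $m$.
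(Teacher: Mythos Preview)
Your argument is correct and takes a genuinely different route from the paper's. Both proofs begin by showing the traces lie in $\bbQ_p$ (the paper phrases this as ``the field of reduced traces is $\bbQ_p$'') and both then analyse the central simple $\bbQ_p$-algebra attached to $\rho_p$. To conclude that this algebra is $M_m(\bbQ_p)$, however, the paper invokes Taylor's theorem \cite{Ta} that the trace of complex conjugation on $\rho_p$ equals $\pm 1$: writing $A\cong M_r(D)$ and letting an involution act on $D^r$, each eigenspace is a right $D$-module, so the reduced trace of any order-$2$ element is a multiple of $\deg D$; hence $\deg D=1$. Your approach never touches complex conjugation. From the palindromic shape of $R_\ell$ you deduce $\rho_p\cong\rho_p^\vee$, whence $[\rho_p]=[\rho_p^\vee]=-[\rho_p]$ in $\mathrm{Br}(\bbQ_p)$, so $2[\rho_p]=0$; since also $m[\rho_p]=0$ and $m=2n+1$ is odd, $[\rho_p]=0$. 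This is more elementary---it applies to any odd-dimensional self-dual irreducible representation with $\bbQ_p$-valued traces---while the paper's argument trades this generality for a concrete arithmetic input specific to automorphic Galois representations.

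One correction to your final paragraph: the transpose bookkeeping identifying $[\rho_p^\vee]$ with both $[\rho_p]$ and $-[\rho_p]$ is \emph{not} the content of \cite{Ta}. Taylor's paper computes $\mathrm{Tr}\,\rho_p(c)$ for $c$ the complex conjugation, and that is exactly what the paper uses; it plays no role whatsoever in your argument. Your computation is a short exercise in cocycles (or, equivalently, the observation that the enveloping algebra of $\rho_p^\vee$ is the opposite algebra), and you should present it as such rather than deferring to \cite{Ta}.
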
 
 \begin{proof} Let $\Gamma$ be the image of $\rho_p$. Consider the group algebra $A=\mathbb Q_p[\Gamma]\subseteq M_{m}(\bar{\bbQ}_p)$.
 Since $\rho_{p}$ is irreducible, the algebra $A$ is simple. Hence $A$ is isomorphic to $M_r(D)$ where $D$ is a division algebra. The center of $D$ is equal to the field of reduced traces of $A$.  The reduced trace is simply the restriction to $A$ of the usual trace on $M_{m}(\bar{\bbQ}_p)$. Hence, 
 by the assumption (3) on $\Pi$, the field of reduced traces of $A$ is ${\bbQ}_p$. This implies that $D$ is a central simple algebra over ${\bbQ}_p$. The algebra $M_r(D)$ acts on $D^r$ from the left. This action commutes with the action of $D$ on $D^r$ from the right. Let $\sigma\in M_r(D)$ be an element of order 2. We can decompose $D^r$ as a sum of the two eigenspaces for $\sigma$ with eigenvalues $\pm 1$. Each  of these two eigenspaces is a $D$-module for the right  action of $D$. Hence the (reduced) trace of $\sigma$ is a multiple of the degree of $D$. However, in \cite{Ta} Taylor has shown that $Tr(\rho_p(c))=\pm 1$, where $c$ is the complex conjugation. Hence 
 $D=\bbQ_p$ and $m=r$, that is, $\Gamma \subseteq GL_m(\bbQ_p)$. 
 \end{proof}

 \begin{prop} If $p\neq q$ then the image of $\rho_{p}$ is contained in a 
 split orthogonal group $SO_{m}(\bbQ_{p})$. 
 \end{prop}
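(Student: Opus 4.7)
My plan is in three steps. First I would establish that $\rho_p$ preserves a nondegenerate symmetric bilinear form $B$ on $V = \bbQ_p^m$. Hypothesis (3) forces the Frobenius eigenvalues to come in reciprocal pairs together with a single eigenvalue $1$, so $\rho_p$ and its contragredient $\rho_p^\vee$ have the same characteristic polynomial on every unramified $\Fr_\ell$; Chebotarev density combined with the irreducibility of $\rho_p$ (Taylor--Yoshida) yields $\rho_p \cong \rho_p^\vee$, i.e.\ a nondegenerate $G_{\bbQ}$-invariant bilinear form $B$ on $V$. Schur's lemma, applied with $\mathrm{End}_{G_{\bbQ}}(\rho_p) = \bbQ_p$ from the previous proposition, makes $B$ unique up to $\bbQ_p^\times$. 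Comparing $B$ with $B^T$, which is also invariant, forces $B^T = \pm B$; since $m$ is odd no nondegenerate alternating form exists on $V$, so $B$ is symmetric. Second, to pass from $O(B)$ to $SO(B)$ I would compute the determinant: since $P_\ell(x)$ is monic palindromic of even degree $2n$, one has $P_\ell(0) = 1$, hence $R_\ell(0) = -1$ and $\det\rho_p(\Fr_\ell) = (-1)^m R_\ell(0) = 1$ at every unramified $\ell$; by Chebotarev and continuity $\det\rho_p \equiv 1$.

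The third and main step is to show $B$ is split, and here the Steinberg hypothesis at $q$ is essential. By assumption $\rho_p(I_q)$ contains an element $u$ that is a regular unipotent of $GL_m$, i.e.\ has a single Jordan block of size $m$. Set $W_i = (u-1)^i V$; these are the unique $u$-stable subspaces of $V$, and $\dim W_i = m - i$. Since $u \in SO(B)$, the orthogonal complement $W_i^\perp$ is also $u$-stable, of dimension $i$, and must therefore coincide with $W_{m-i}$. Taking $i = n+1$ gives $W_{n+1} \subseteq W_{n+1}^\perp = W_n$, so $W_{n+1}$ is an isotropic subspace of $(V,B)$ of dimension $n$. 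Over $\bbQ_p$ a nondegenerate quadratic form in odd dimension $m = 2n+1$ has Witt index either $n$ (split) or $n-1$ (the unique non-split inner form, with $3$-dimensional anisotropic kernel realized by the reduced-norm form of the quaternion division algebra over $\bbQ_p$). The existence of an $n$-dimensional isotropic subspace therefore forces $B$ to be the split form, so the image of $\rho_p$ lies in $SO_m(\bbQ_p)$ split.

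The main obstacle is the third step: that the image preserves \emph{some} nondegenerate orthogonal form is a formal consequence of self-duality together with the parity of $m$, but identifying the isomorphism class of $B$ requires structural input. The Steinberg hypothesis at the auxiliary prime $q$, transported to the Galois side by Taylor--Yoshida as a regular unipotent on inertia, is precisely the right input: a single full-length Jordan block in $SO(B)$ automatically produces the maximal-dimensional isotropic subspace needed to distinguish the split form from its non-split partner.
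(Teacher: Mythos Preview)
Your proof is correct and follows exactly the paper's three-step outline: irreducibility plus self-duality forces an invariant nondegenerate bilinear form which must be symmetric in odd dimension; the palindromic shape of $R_\ell$ gives $\det\rho_p(\Fr_\ell)=1$ on a dense set, hence image in $SO(B)$; and the regular unipotent coming from the Steinberg component at $q$ forces $B$ to be split. The only difference is that the paper simply asserts ``only the split isomorphism class contains the regular unipotent conjugacy class,'' whereas you supply the concrete justification via the flag $W_i=(u-1)^iV$ and the resulting $n$-dimensional isotropic subspace---a welcome elaboration, but not a different argument.
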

 \begin{proof} 
 Since $\rho_{p}$ is irreducible and self-dual it preserves a non-degenerate 
 bilinear form,  unique up to a non-zero scalar. Since $m$ is odd, the form has to be orthogonal. Moreover, 
 since the determinant of $\rho_{p}(\Fr_{\ell})$ is one for all $\ell\neq p,q$, and these 
 elements are dense, the image of the group must be contained in $SO_{m}(\bbQ_{p})$. 
 There are two isomorphism classes of odd orthogonal groups over $\bbQ_{p}$, but only 
 the split isomorphism class contains the regular unipotent conjugacy class. 
 \end{proof}
 
 Assume now that $n=3$. For every $\ell\neq q$ let
 let $Q_{\ell}(y)=y^{3}-a_{\ell}y^{2}+b_{\ell}y-c_{\ell}$
 be the polynomial obtained by the palindromic reduction of $P_{\ell}(x)$. Let 
 $x_{1}^{\pm 1},x_{2}^{\pm 1}, x_{3}^{\pm 1}$ be the roots of $P_{\ell}$. 
 If $\Pi_{\ell}$ is a local lift from $G_{2}(\bbQ_{\ell})$ then $x_{1}x_{2}x_{3}=1$. 
 This imposes a condition on the coefficients of $P_{\ell}(x)$ which translates to 
 \[
 a_{\ell}^{2}=c_{\ell}+2b_{\ell}+4. 
 \]
 If this holds for every $\ell\neq q$, we shall say that $\Pi$ is locally a lift from $G_{2}$. 
 
 \begin{cor} \label{C:G2} 
 With $n=3$. If $\Pi$ is locally a lift from $G_{2}$ then the image of 
 $\rho_{p}$ is contained in $G_{2}(\bbQ_{p})$ for all $p\neq q$. 
 \end{cor}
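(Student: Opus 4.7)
Let $H \subseteq SO_7/\bbQ_p$ denote the Zariski closure of $\Gamma := \rho_p(\Gal(\bar\bbQ/\bbQ))$, an algebraic subgroup defined over $\bbQ_p$ whose $\bbQ_p$-points contain $\Gamma$; write $H^0$ for its identity component. Since $\bbQ_p$ admits only the split form of $G_2$, it suffices to prove $H = G_2$, for then $\Gamma \subseteq H(\bbQ_p) = G_2(\bbQ_p)$.

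The first task is to identify $\mathfrak{h}^0 := \mathrm{Lie}(H^0) \subseteq \mathfrak{so}_7$ via the classification of subalgebras. By the Taylor--Yoshida result quoted in the paper, the image of inertia at $q$ contains a regular unipotent of $SO_7$; therefore $H^0$ does, and $\mathfrak{h}^0$ contains a regular nilpotent $N$. Irreducibility of $\rho_p$ makes $V_7$ a semisimple $H^0$-module, so $H^0$ is reductive, and because $N$ has a single $7 \times 7$ Jordan block on $V_7$, this module is in fact $\mathfrak{h}^0$-irreducible. Dynkin's classification of irreducible reductive subalgebras of $\mathfrak{so}_7$ then limits $\mathfrak{h}^0$ to three candidates: the principal $\mathfrak{sl}_2$, $\mathfrak{g}_2$, or $\mathfrak{so}_7$ itself.

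Next I eliminate the two extremes using the hypothesis $a_\ell^2 = c_\ell + 2b_\ell + 4$. This identity defines a proper closed subvariety $Z \subsetneq SO_7$, cut out by a nonzero regular function on characteristic polynomial coefficients. Since every $\rho_p(\Fr_\ell)$ lies in $Z$, Zariski density gives $H \subseteq Z$, which rules out $\mathfrak{h}^0 = \mathfrak{so}_7$. For the principal $\mathfrak{sl}_2$, a semisimple element with torus parameter $t$ has $V_7$-eigenvalues $\{1, t^{\pm 2}, t^{\pm 4}, t^{\pm 6}\}$, so $x_1 x_2 x_3 = t^{12}$, and the $G_2$-condition would force $t^{12}=1$ for every Frobenius. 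All Satake parameters would then be roots of unity of bounded order, contradicting assumption (1) on $\Pi_\infty$: the regular algebraic self-dual cuspidal representation $\Pi$ has infinitely many distinct characteristic polynomials $R_\ell$. Hence $\mathfrak{h}^0 = \mathfrak{g}_2$ and $H^0$ is the split $G_2 \subset SO_7/\bbQ_p$.

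Finally, $H \subseteq N_{SO_7}(H^0) = N_{SO_7}(G_2)$. Schur's lemma applied to the irreducible $V_7$ yields $C_{GL_7}(G_2) = \bbQ_p^\times$, whence $C_{SO_7}(G_2)$ is trivial; combined with $\mathrm{Out}(G_2) = 1$ this gives $N_{SO_7}(G_2) = G_2$, so $H = H^0 = G_2$ and $\Gamma \subseteq G_2(\bbQ_p)$. The principal obstacle is the classification step isolating $\mathfrak{h}^0$ as one of three candidates (reductivity of $H^0$, irreducibility of $V_7$ under $\mathfrak{h}^0$ from the single Jordan block, and Dynkin's list); once that is in hand, elimination of the two extremes via the $G_2$-relation and the normalizer computation are routine.
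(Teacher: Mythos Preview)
Your overall architecture matches the paper's: pass to the Zariski closure $H$, show $H^0$ is reductive and contains a regular unipotent of $SO_7$, cut the list of possible $H^0$ down to the principal $PGL_2$, $G_2$, or $SO_7$, then use the $G_2$-relation and self-normalization. The elimination of $SO_7$ and the normalizer computation are fine.

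The genuine gap is your elimination of the principal $\mathfrak{sl}_2$. With torus parameter $t$ the $V_7$-eigenvalue pairs are $\{t^{2},t^{-2}\}$, $\{t^{4},t^{-4}\}$, $\{t^{6},t^{-6}\}$, and you compute $x_1x_2x_3=t^{12}$ by silently choosing $x_1=t^{2}$, $x_2=t^{4}$, $x_3=t^{6}$. But the characteristic-polynomial condition $a_\ell^2=c_\ell+2b_\ell+4$ only sees the unordered multiset of eigenvalues; it is equivalent to the existence of \emph{some} choice of one element from each pair whose product is $1$. Taking $x_1=t^{2}$, $x_2=t^{4}$, $x_3=t^{-6}$ gives $x_1x_2x_3=1$ identically, so every semisimple element of the principal $PGL_2$ satisfies the $G_2$-relation. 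This had to be so: the principal $PGL_2$ of $SO_7$ sits inside $G_2$. Hence your contradiction ``$t^{12}=1$ for all Frobenii'' never materializes, and the appeal to condition~(1) is moot.

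The fix is exactly what the paper does: do not try to rule out $H^0\cong PGL_2$. Since the principal $PGL_2\subset G_2$ and (by the same Schur/\,$\mathrm{Out}=1$ argument you already gave for $G_2$) it is self-normalizing in $SO_7$, in that case $H=H^0=PGL_2\subset G_2$ as well. Either way $\Gamma\subseteq G_2(\bbQ_p)$, which is all the corollary asserts.
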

 \begin{proof}
 Let $G$ be the Zariski closure of the image of $\rho_{p}$ in 
 $SO_{7}(\bbQ_{p})$. Since $G$ acts irreducibly on the 7-dimensional representation, 
 $G$ is a reductive group. 
  Since $a^{2}=c^{2}+2b+4$ is an algebraic condition, and 
 $\rho_{p}(\Fr_{\ell})$ are dense in the image, this condition holds for all elements 
 in $G$. Thus the rank of $G$ is at most 2.  Recall that the image of the inertia $I_{q}$ 
 contains the regular unipotent element. Since $1$ is contained in the closure of 
 any unipotent class, it follows that the connected component $G^{\circ}$ of 1 
 contains the regular unipotent class. Thus $G^{\circ}$ is either $G_{2}(\bbQ_{p})$ or 
 its principal $PGL_{2}(\bbQ_{p})$. Since these two groups are self-normalizing in 
 $SO_{2n+1}(\bbQ_{p})$, it follows that $G\cong G_{2}(\bbQ_{p})$ or 
 $PGL_{2}(\bbQ_{p})\subseteq G_{2}(\bbQ_{p})$. 
 \end{proof}

\section{An Example}

Assume that $G$ is the unique form, over $\bbQ$, of the exceptional Lie group of type $G_{2}$
such that $G(\bbR)$ is compact and $G(\bbQ_{p})$ is split for all primes $p$. 
In \cite{GS} it is shown that there exists an automorphic representation 
$\pi$ such that $\pi_{\infty} \cong \bbC$, $\pi_5$ is the Steinberg representation, and 
$\pi_{\ell}$ is unramified for all other primes $\ell$. Moreover, the characteristic polynomial  $R_{\ell}(x)$ 
 of the Satake parameter $s_{\ell}\in G_2(\mathbb C)$ of $\pi_{\ell}$, 
acting on the 7-dimensional representation has coefficients in $\bbZ[\frac{1}{\ell}]$ (see also \cite {Gr}). 
In \cite{LP} Lansky and Pollack have calculated the polynomial $R_{\ell}(x)$ for $\ell=2$ and $3$: 
\[
R_{2}(x)=x^{7}+\frac{1}{4}x^{6}-x^{5} -\frac{13}{16}x^{4}
+\frac{13}{16}x^{3} +x^{2} -\frac{1}{4}x-1
\]
\[
R_{3}(x)=x^{7}-\frac{29}{3^{3}}x^{6}+\frac{175}{3^{5}}x^{5} -\frac{1099}{3^{6}}x^{4}
+\frac{1099}{3^{6}}x^{3} -\frac{175}{3^{5}}x^{2} +\frac{29}{3^{3}}x-1.
\]
After factoring $R_{\ell}(x)=P_{\ell}(x)\cdot(x-1)$, the two palindromic polynomials
$P_{\ell}(x)$ are reduced to 
\[
Q_{2}(y)= y^{3}+\frac{5}{4}y^{2}-\frac{11}{4}y-\frac{49}{16}
\]
\[
Q_{3}(y)= y^{3}-\frac{2}{3^{3}}y^{2}-\frac{572}{3^{5}}y-\frac{520}{3^{6}}.
\]
Let $\Delta_{\ell}$ be the discriminant of $Q_{\ell}$. We have the following 
numerical values:
\smallskip 
\[
\begin{array}{|c|c|c|c|}
\hline 
\ell & Q_{\ell}(2) & Q_{\ell}(-2) & \Delta_{\ell} \\
 \hline 
2 &\frac{71}{16} & -\frac{9}{16} & \frac{71\cdot 199}{2^{8}}\\
\hline 
3 & \frac{2^{7}\cdot 13}{3^{6}} & -\frac{2^{6}\cdot 7^{2}}{3^{6}} & \frac{2^{14}\cdot 13\cdot 7321}{3^{16}}\\
\hline 
\end{array}
\]

\begin{prop} \label{P:tempered}  The local components $\pi_2$ and $\pi_3$ are tempered. The splitting fields of $P_2(x)$ and $P_3(x)$ have the Galois group 
isomorphic to $D_6$ and are algebraically independent.
\end{prop}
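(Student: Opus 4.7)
The plan is to reduce the proposition to three arithmetic checks that all feed into the machinery already developed in Section~5.

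First I would handle temperedness. The local component $\pi_\ell$ is tempered precisely when its Satake parameter $s_\ell\in G_2(\mathbb C)$ has all eigenvalues on the 7-dimensional representation lying on the unit circle. Since $R_\ell(x)=P_\ell(x)(x-1)$ and $P_\ell$ is palindromic, its roots $x_i^{\pm 1}$ lie on the unit circle if and only if each $y_i = x_i + x_i^{-1}$ is real and lies in $[-2,2]$; equivalently, the cubic $Q_\ell(y)$ has three real roots in $(-2,2)$. The table already records $Q_\ell(2)$ and $Q_\ell(-2)$ with opposite signs for both $\ell=2$ and $\ell=3$, and the listed discriminants $\Delta_2,\Delta_3$ are positive. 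Together with the signs of $Q_\ell$ at $\pm\infty$, the intermediate value theorem produces three real roots inside $(-2,2)$, proving temperedness.

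Next I would verify the hypotheses of Proposition~\ref{palin} for each $\ell\in\{2,3\}$, with $n=3$ and $K=\mathbb Q$. The Galois group of $Q_\ell$ is $S_3$ as soon as $Q_\ell$ is irreducible (no rational root) and $\Delta_\ell$ is not a square. Both facts can be read off the table: the discriminant $\Delta_\ell$ contains a prime to an odd power, so it is not a square in $\mathbb Q$; irreducibility of the cubics can be checked by the rational root test applied to $4\cdot 16\cdot Q_2$ and $3^6\cdot Q_3$ (equivalently, by a mod-$p$ argument for a small prime of good reduction). From the table I compute $\Delta'_\ell=Q_\ell(2)\,Q_\ell(-2)$, obtaining
\[
\Delta'_2\;=\;-\tfrac{71\cdot 9}{256},\qquad \Delta'_3\;=\;-\tfrac{2^{13}\cdot 13\cdot 7^2}{3^{12}},
\]
so $\mathbb Q(\sqrt{\Delta'_2})=\mathbb Q(\sqrt{-71})$ and $\mathbb Q(\sqrt{\Delta'_3})=\mathbb Q(\sqrt{-26})$. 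Neither is a square, and comparing with $\mathbb Q(\sqrt{\Delta_2})=\mathbb Q(\sqrt{71\cdot 199})$ and $\mathbb Q(\sqrt{\Delta_3})=\mathbb Q(\sqrt{13\cdot 7321})$ shows $\mathbb Q(\sqrt{\Delta_\ell})\neq\mathbb Q(\sqrt{\Delta'_\ell})$ in each case. Proposition~\ref{palin} then says the Galois group of $P_\ell$ is $D_6$ or the Weyl group of $BC_3$; but since $\Pi$ is locally a lift from $G_2$, the roots satisfy $x_1x_2x_3=1$, so Corollary~\ref{Cor2} forces the group to be $D_6$.

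Finally, for algebraic independence I would invoke Corollary~\ref{Cor1}: it suffices that the biquadratic fields
\[
\mathbb Q\bigl(\sqrt{71\cdot 199},\,\sqrt{-71}\bigr)\quad\text{and}\quad \mathbb Q\bigl(\sqrt{13\cdot 7321},\,\sqrt{-26}\bigr)
\]
share no common quadratic subfield. Their quadratic subfields are determined, up to squares, by the classes of $\{-71,\,199,\,-71\cdot 199\}$ and $\{-26,\,13\cdot 7321,\,-2\cdot 7321\}$ in $\mathbb Q^\times/(\mathbb Q^\times)^2$. Since $71,199,13,7321$ are distinct primes, a direct inspection of the squarefree parts shows no element of the first set matches one in the second, establishing algebraic independence.

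The only genuine obstacle is bookkeeping: one must check that $7321$ is prime (trial division up to $\lfloor\sqrt{7321}\rfloor=85$) and that no squarefree coincidences occur between the two sets, but these are mechanical. Everything else is a direct citation of Corollaries~\ref{Cor1} and \ref{Cor2}.
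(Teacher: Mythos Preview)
Your overall strategy matches the paper's: locate the roots of $Q_\ell$ in $(-2,2)$ for temperedness, then invoke Corollary~\ref{Cor2} for the Galois group and Corollary~\ref{Cor1} for independence. However, your temperedness argument has a genuine gap. Knowing that $\Delta_\ell>0$ (three real roots), that $Q_\ell(-2)<0$ and $Q_\ell(2)>0$, and that $Q_\ell(\pm\infty)=\pm\infty$ only tells you that an \emph{odd} number of roots lie in $(-2,2)$; it does not exclude the configuration with two roots in $(-\infty,-2)$ (or two in $(2,\infty)$) and one inside. For instance, $(y+3)(y+\tfrac52)(y-1)$ satisfies all of your sign conditions yet has two roots below $-2$. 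The paper closes this gap by also checking the derivative: $Q_\ell'(-2)>0$, $Q_\ell'(0)<0$, $Q_\ell'(2)>0$ force both critical points of the cubic into $(-2,2)$, so $Q_\ell$ is monotone on $(-\infty,-2]$ and on $[2,\infty)$, and the sign information then does pin all three roots inside $(-2,2)$. You need an extra check of this kind.

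A minor slip: your list of quadratic subfields for $\ell=2$ is off. From $\Delta_2\equiv 71\cdot 199$ and $\Delta_2'\equiv -71$ (mod squares) the three subfields correspond to $\{71\cdot 199,\,-71,\,-199\}$, not $\{-71,\,199,\,-71\cdot 199\}$. The conclusion is unaffected, since the primes $\{71,199\}$ occurring for $\ell=2$ are disjoint from $\{2,13,7321\}$ for $\ell=3$, so no squarefree class can coincide. Aside from these two points, your argument is the same as the paper's, and in fact more explicit: you correctly note that irreducibility of $Q_\ell$ must be checked and that the hypotheses of Proposition~\ref{palin} need verification before Corollary~\ref{Cor2} can be applied, both of which the paper leaves implicit.
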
 
\begin{proof} Since $\Delta_{\ell}>0$, the polynomials $Q_{\ell}(y)$ have 3 real roots, each. 
Since $Q_{\ell}'( -2)>0$,  $Q_{\ell}'(0)<0$ and $Q_{\ell}'( 2)>0$ the inflection points are in the segment $(-2,2)$. 
Since $Q_{\ell}(-2)<0$ and $Q_{\ell}(2)>0$ the roots are in  the segment $(-2,2)$. This shows that the roots of 
$P_{2}(x)$ and $P_{3}(x)$ lie on the unit circle. 

Since $\Delta_2$ and $\Delta_3$ are not rational squares, it follows that the Galois group of, both, 
$Q_2(y)$ and $Q_3(y)$ is $S_3$. By Corollary \ref{Cor2}, the Galois group of, both, $P_2(x)$ and $P_3(x)$ 
is isomorphic to $D_6$. Moreover, the two splitting fields are algebraically independent by Corollary \ref{Cor1}. 

\end{proof} 

In \cite{GS} it is also shown that $\pi$ lifts to a cuspidal  automorphic representation $\sigma$ on $Sp_6$ such that 
 \begin{enumerate}
 \item $\sigma_{\infty}$ is a holomorphic discrete series representation. 
 \item  $\sigma_{5}$ is the Steinberg representation. 
 \item $\sigma_{\ell}$ is an unramified representation, a lift from $G_2(\mathbb Q_{\ell})$, for $\ell\neq 5$. 
 \item $\sigma_{2}$ and $\sigma_3$ are tempered with Satake parameters given by $R_2(x)$ and $R_3(x)$. 
 \end{enumerate}
\vskip 5pt 

We shall now use the results of Arthur \cite{Ar} to lift $\sigma$ to a cuspidal form on $GL_7$. We need the following. 

\begin{prop} Let $\sigma$ be a cuspidal automorphic representation on $Sp_{2n}$ such that $\sigma_q$ is the Steinberg representation for a prime $q$. 
Let $\Pi$ be the automorphic representation of $GL_{2n+1}$, the lift of $\sigma$ as in Theorem 1.5.2 in \cite{Ar}. Then $\Pi_q$ is the Steinberg representation and  $\Pi$ is cuspidal.  
\end{prop}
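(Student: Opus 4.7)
The plan is to prove both assertions by localizing at $q$ and invoking the compatibility of Arthur's global transfer with the local Langlands correspondence. Arthur's lift $\sigma \leadsto \Pi$ is functoriality along the standard embedding $\iota\colon SO_{2n+1}(\bbC) \hookrightarrow GL_{2n+1}(\bbC)$, so at each place $v$ the L-parameter of $\Pi_v$ equals $\iota\circ \phi_v$, where $\phi_v$ is the L-parameter of $\sigma_v$.

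First I would determine $\Pi_q$. The Steinberg of $Sp_{2n}(\bbQ_q)$ has L-parameter $\phi_q$ trivial on the Weil group, with the Deligne $SL_2$ mapped via the principal homomorphism into $SO_{2n+1}(\bbC)$. The principal $SL_2$ of $SO_{2n+1}$ acts on the standard $(2n+1)$-dimensional representation as a single Jordan block, so $\iota \circ \phi_q$ is the principal $SL_2$ in $GL_{2n+1}(\bbC)$, trivial on the Weil group. This is exactly the L-parameter of the Steinberg representation of $GL_{2n+1}(\bbQ_q)$, so local Langlands for $GL_{2n+1}$ gives $\Pi_q \cong \mathrm{St}_{GL_{2n+1}}$.

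For cuspidality I would use Arthur's global classification. The cuspidal $\sigma$ corresponds to a discrete global parameter $\psi = \boxplus_{i=1}^{k} \mu_i \boxtimes [b_i]$, with each $\mu_i$ a self-dual cuspidal on $GL_{a_i}(\bbA)$ and $\sum a_i b_i = 2n+1$; the lift is $\Pi = \boxplus_i \mathrm{Speh}(\mu_i, b_i)$, cuspidal precisely when $k=1$ and $b_1=1$. If some $b_i \geq 2$, then $\mathrm{Speh}(\mu_i, b_i)$ is non-tempered and $\Pi_q$ acquires a non-tempered Langlands constituent; but Steinberg is tempered, contradiction. If all $b_i = 1$ but $k \geq 2$, then $\Pi_q$ is the irreducible parabolic induction from the proper Levi $GL_{a_1}\times \cdots \times GL_{a_k}$, which is tempered but not a discrete series, again contradicting Steinberg. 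Hence $k=1$ and $b_1=1$, so $\Pi = \mu_1$ is cuspidal.

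The main technical subtlety is the local-global compatibility at the ramified place $q$; for the Steinberg this is essentially automatic, since both local Steinbergs are characterized uniquely by their principal-unipotent L-parameters, making the functorial transfer at $q$ a formal matter of composing with $\iota$. One does however need to cite the appropriate local packet statement in Arthur's book to legitimize this step.
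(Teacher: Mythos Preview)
Your outline is reasonable and the cuspidality argument via the isobaric shape $\psi=\boxplus_i\,\mu_i\boxtimes[b_i]$ is a legitimate alternative to the paper's route. However, your first step has a real gap. The assertion that ``at each place $v$ the L-parameter of $\Pi_v$ equals $\iota\circ\phi_v$, where $\phi_v$ is the L-parameter of $\sigma_v$'' is stronger than what Arthur's theorem actually delivers. What Theorem~1.5.2 gives is a global Arthur parameter $\psi$ with localizations $\psi_v$ such that $\sigma_v$ lies in the local Arthur packet $\Pi_{\psi_v}$; the component $\Pi_v$ is then the $GL_{2n+1}$-representation attached to $\psi_v$. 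Arthur packets on $Sp_{2n}$ are indexed by maps of $W_F\times SL_2\times SL_2$, not by L-parameters, and a given representation can in principle sit in more than one packet. So knowing $\sigma_q=\mathrm{St}$ does not by itself determine $\psi_q$; you must show that the Steinberg appears in \emph{no} Arthur packet other than its own tempered L-packet. Your closing remark that ``for the Steinberg this is essentially automatic'' is precisely where the content lies, and it is not a matter of citing a compatibility statement.

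The paper fills this gap by a different device. It first pins down the unique Arthur packet containing the \emph{trivial} representation (via strong approximation, since the trivial representation occurs globally only in the trivial automorphic representation), and then invokes Arthur's Lemma~7.1.1, which says that swapping the two $SL_2$ factors in $\psi$ corresponds to the Aubert involution on packets. Since Aubert takes trivial to Steinberg, the Steinberg also lies in a unique packet, namely the one with $\psi$ trivial on $W_F$ and on the Arthur-$SL_2$, principal on the Deligne-$SL_2$. The paper separately disposes of the enlarged packets in $\Psi^+_{\unit}(G)\setminus\Psi(G)$, which consist of fully induced representations and hence cannot contain Steinberg. Only then does $\Pi_q=\mathrm{St}_{GL_{2n+1}}$ follow.

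For cuspidality the paper also argues differently from you: once $\Pi_q$ is Steinberg, it observes that Steinberg is Whittaker-generic (hence not a local component of anything in the residual spectrum) and not induced from a proper parabolic (hence not in the continuous spectrum), so $\Pi$ must be cuspidal. Your case analysis on $(k,b_i)$ reaches the same conclusion and is arguably more in the spirit of Arthur's classification; either works once $\Pi_q$ is known.
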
 
\begin{proof} The representation $\Pi$ belongs to the automorphic $L^2$-spectrum.  We shall now argue that $\Pi$ is a cuspidal automorphic representation if 
a local component is the Steinberg representation. Recall that the Steinberg representation is Whittaker-generic. Thus,  it cannot be a local component of an automorphic representation in the residual spectrum.  Next, the Steinberg representation is not induced from a proper parabolic subgroup. Thus, it cannot be a local component of an automorphic representation in the continuous spectrum. Hence the Steinberg representation can only be a local component of a cuspidal automorphic representation. In order to prove the proposition it remains to show that $\Pi_q$ is the Steinberg representation. 

Write $G=Sp(2n)$. Let F be a $p$-adic local field and $W_F$ the Weil group.  Let $\Psi(G)$ be the set of Arthur parameters i.e. maps
\[ 
\psi : W_F \times  SU(2) \times  SU(2) \rightarrow SO(2n+1,\bbC) 
\] 
with bounded image. In Theorem 1.5.1 in \cite{Ar}, to every such $\psi$, Arthur attaches a set (a packet) of unitary representations of $G$ over $F$, 
in fact, local constituents of automorphic representations.  By the strong approximation
theorem, the trivial representation appears only as a component of the 
global trivial representation. Hence the trivial representation appears only in the packet of  $\psi$ trivial on $W_F$, trivial  on the first $SU(2)$, and such that 
the image of the second $SU(2)$ is the principal $SU(2)$ in $SO(2n+1,\bbC)$. 
 By Lemma 7.1.1 in \cite{Ar}, the involution on the set of  parameters  $\psi$ given by the switching of the two $SU(2)$ corresponds to the Aubert involution on
representations.  Since the Aubert involution of the trivial representation is the Steinberg representation, 
 the Steinberg representation is contained in its $L$-packet and in no other packets. 

The local components of $\sigma$ sit in the packets determined by the local components of $\Pi$. 
 However, since we do not know the generalized Ramanujan conjecture, the local components of $\sigma$ a priori lie in 
packets parameterized by a larger set, denoted by $\Psi^+_{\unit}(G)$ by Arthur. According to the displayed formula (1.5.1) in \cite{Ar}  the
packets in $\Psi^+_{\unit}(G)$, but not in $\Psi(G)$,  consist of representations
fully induced from a proper parabolic subgroup. In particular, the Steinberg representation cannot be in any of these packets.
 Thus, if $\sigma_q$ is the Steinberg representation, then $\Pi_q$  is  the Steinberg representation of $GL_{2n+1}$ by
the above discussion. Hence  $\Pi$ is cuspidal.

\end{proof} 

Since the lift $\Pi$ is cuspidal, it is a functorial lift of $\sigma$. Summarizing, there exits a cusp form on $GL_7$ to which we can apply Corollary \ref{C:G2}. Thus, the image of the associated $p$-adic representation is contained in $G_2(\mathbb Q_p)$ for all $p\neq 5$. 
Applying Theorem   \ref{main2} and Proposition \ref{P:tempered} yields the following result:  

\begin{theorem} There exists an extension of $\bbQ$ with $G_2(p)$ as the Galois group, unramified at $5$ and $p$ only, for a set of primes $p$ of density at least $1/18$. 
\end{theorem}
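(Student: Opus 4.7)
The plan is to combine four earlier results of the paper: the construction of the Galois representation $\rho_p$ attached to $\Pi$ from Section~7, Corollary \ref{C:G2} confining its image to $G_2(\bbQ_p)$, Proposition \ref{P:tempered} verifying the Galois-theoretic hypotheses at the primes $2$ and $3$, and the generation criterion of Theorem \ref{main2}. Since the cuspidal lift $\Pi$ on $GL_7$ satisfies the hypotheses (1)--(3) of Section 7 with $q = 5$, for every prime $p \neq 5$ we obtain a continuous representation
\[
\rho_p : \Gal(\bar{\bbQ}/\bbQ) \rightarrow G_2(\bbQ_p),
\]
unramified outside $\{5, p\}$, whose characteristic polynomial at $\Fr_\ell$ (for $\ell \neq p, 5$) equals $R_\ell(x)$.

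First I would fix the data required by Theorem \ref{main2}. Let $A$ and $B$ be the rational semi-simple conjugacy classes in $G_2(\bbQ)$ whose characteristic polynomials on $V_7$ are $R_2(x)$ and $R_3(x)$, respectively. Proposition \ref{P:tempered} says exactly that the reduced palindromic factors $P_2(x)$ and $P_3(x)$ have splitting fields with Galois group $D_6$ and that these splitting fields are algebraically independent over $\bbQ$, so the hypotheses of Theorem \ref{main2} on $A$ and $B$ are met.

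Next, for each prime $p \notin \{2, 3, 5\}$, I would set
\[
a_p = \rho_p(\Fr_2) \in A(\bbQ_p), \qquad b_p = \rho_p(\Fr_3) \in B(\bbQ_p),
\]
which is legitimate because $\rho_p$ is unramified at $2$ and $3$. Since $\Gal(\bar{\bbQ}/\bbQ)$ is compact and $\rho_p$ is continuous, its image is a compact subgroup of $G_2(\bbQ_p)$, hence preserves a lattice $\Lambda_p \subset V_7 \otimes \bbQ_p$; the stabilizer $U_p \subset G_2(\bbQ_p)$ of this lattice is then a maximal compact subgroup that simultaneously contains $a_p$ and $b_p$.

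With these choices the hypotheses of Theorem \ref{main2} are in force, and that theorem delivers a set of primes $p$ of density at least $1/18$ such that $U_p$ is hyperspecial and the images $\bar a_p$, $\bar b_p$ in the reductive quotient $U_p / U_p^+ \cong G_2(p)$ generate $G_2(p)$. Since these images lie in the image of the reduction map
\[
\bar\rho_p : \Gal(\bar{\bbQ}/\bbQ) \xrightarrow{\rho_p} U_p \twoheadrightarrow G_2(p),
\]
the homomorphism $\bar\rho_p$ is surjective; its fixed field is a Galois extension of $\bbQ$ with Galois group $G_2(p)$. Because $\rho_p$ is ramified only at $5$ and $p$, so is $\bar\rho_p$, giving the ramification claim. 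The only steps requiring any real vigilance are the existence of the common maximal compact $U_p$ (which is automatic from compactness of the image) and the reminder that excluding the finitely many primes $p \in \{2,3,5\}$ leaves the density statement untouched; all the heavy lifting has already been done in Theorem \ref{main2}, Corollary \ref{C:G2}, and Proposition \ref{P:tempered}.
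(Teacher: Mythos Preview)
Your argument is correct and follows exactly the route the paper intends: assemble the cuspidal $\Pi$ on $GL_7$, invoke Corollary~\ref{C:G2} to land $\rho_p$ in $G_2(\bbQ_p)$, feed the Frobenii at $2$ and $3$ into Theorem~\ref{main2} using Proposition~\ref{P:tempered}, and read off $G_2(p)$ as a quotient of the image. The only imprecision is the assertion that the stabilizer in $G_2(\bbQ_p)$ of an arbitrary invariant lattice is automatically a \emph{maximal} compact subgroup; what you actually need (and have) is that the compact image of $\rho_p$ lies in \emph{some} maximal compact $U_p$, which is enough for Theorem~\ref{main2}.
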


\end{document}